\numberwithin{equation}{section}
\newtheorem{thm}{Theorem}[section]
\newtheorem{cor}[thm]{Corollary}
\newtheorem{lemma}[thm]{Lemma}
\newtheorem{remark}[thm]{Remark}
\begin{document}
\title[A singular limit of the Kobayashi-Warren-Carter system]{Fractional time differential equations as a singular limit of the Kobayashi--Warren--Carter system}

\author[Y.~Giga]{Yoshikazu Giga}
\address[Y.~Giga]{Graduate School of Mathematical Sciences, The University of Tokyo, 3-8-1 Komaba, Meguro-ku, Tokyo 153-8914, Japan.}
\email{labgiga@ms.u-tokyo.ac.jp.}

\author[A.~Kubo]{Ayato Kubo}
\address[A.~Kubo]{Department of Mathematics, Faculty of Science, Hokkaido University, Kita 10, Nishi 8, Kita-Ku, Sapporo, Hokkaido, 060-0810, Japan.}
\email{kubo.ayato.j8@elms.hokudai.ac.jp}

\author[H.~Kuroda]{Hirotoshi Kuroda}
\address[H.~Kuroda]{Department of Mathematics, Faculty of Science, Hokkaido University, Kita 10, Nishi 8, Kita-Ku, Sapporo, Hokkaido, 060-0810, Japan.}
\email{kuro@math.sci.hokudai.ac.jp}

\author[J.~Okamoto]{Jun Okamoto}
\address[J.~Okamoto]{Institute for the Advanced Study of Human Biology, Kyoto University, Yoshida-Konoe-Cho, Sakyo-ku, Kyoto 606-8501, Japan.}
\email{okamoto.jun.8n@kyoto-u.ac.jp}

\author[K.~Sakakibara]{Koya Sakakibara}
\address[K.~Sakakibara]{Faculty of Mathematics and Physics, Institute of Science and Engineering, Kanazawa University, Kakuma-machi, Kanazawa-shi, Ishikawa 920-1192, Japan; 
RIKEN iTHEMS, 2-1 Hirosawa, Wako-shi, Saitama 351-0198, Japan.}
\email{ksakaki@se.kanazawa-u.ac.jp}

\author[M.~Uesaka]{Masaaki Uesaka}
\address[M.~Uesaka]{Graduate School of Mathematical Sciences, The University of Tokyo, 3-8-1 Komaba, Meguro-ku, Tokyo 153-8914, Japan;
Arithmer Inc., ONEST Hongo Square 3F, 1-24-1 Hongo, Bunkyo-ku, Tokyo 113-0033, Japan.}

\subjclass[2020]{35R11, 35Q74, 35K20, 74N99}
\keywords{Kobayashi--Warren--Carter system, gradient flow, singular limit, fractional time derivative}

\begin{abstract}
	This paper is concerned with a singular limit of the Kobayashi--Warren--Carter system, a phase field system modelling the evolutions of structures of grains.
	Under a suitable scaling, the limit system is formally derived when the interface thickness parameter tends to zero.
 	Different from many other problems, it turns out that the limit system is a system involving fractional time derivatives, although the original system is a simple gradient flow.
 	A rigorous derivation is given when the problem is reduced to a gradient flow of a single-well Modica--Mortola functional in a one-dimensional setting.
\end{abstract}

\maketitle


\section{Introduction} \label{S1} 

We consider the Kobayashi--Warren--Carter system, introduced in \cite{KWC1,KWC2,KWC3}, to model evolutions of structures in a multi-grain problem.
 It is a kind of phase-field system in a domain $\Omega$ in $\mathbf{R}^n$, formally a gradient flow of the energy
\begin{align}
	&E_\mathrm{KWC}^\varepsilon(u,v) := \int_\Omega \alpha_0(v) |\nabla u| + E_\mathrm{sMM}^\varepsilon(v),  \label{EEKWC} \\
	&E_\mathrm{sMM}^\varepsilon(v) := \int_\Omega \frac\varepsilon2 |\nabla v|^2\, dx + \int_\Omega \frac{1}{2\varepsilon} F(v)\, dx. \label{EEsMM}
\end{align}
Here, $\alpha_0(v)\geq0$ is a given function, typically $\alpha_0(v)=sv^2$, with a constant $s>0$, and $F(v)$ is a single-well potential, typically $F(v)=a^2(v-1)^2$, with a constant $a>0$.
 The functional $E_\mathrm{sMM}^\varepsilon$ is often called a single-well Modica--Mortola functional.
 The Kobayashi--Warren--Carter system is regarded as a gradient flow with respect to $L^2$-inner product
\begin{multline*}
	\left((u_1,v_1),(u_2,v_2)\right)
	= \int_\Omega \alpha_w u_1 u_2 \, dx
	+ \tau \int_\Omega v_1 v_2 \, dx,\\
	(u_i, v_i) \in L^2(\Omega) \times L^2(\Omega), \quad
	i=1,2,
\end{multline*}
where $\alpha_w\geq0$ and $\tau>0$ are weights.
 The function $\alpha_w=\alpha_w(v)$ is given, but it may depend on $v\in L^2(\Omega)$, so the above inner product is a Riemann metric on the tangent bundle $TL^2(\Omega)$.
 A typical form of $\alpha_w(v)$ equals $\alpha_w(v)=\tau_0v^2$, where $\tau_0$ is a positive constant.
 We consider the gradient flow of $E_\mathrm{KWC}^\varepsilon$ under this metric, and its explicit form is
\begin{empheq}[left = {\empheqlbrace \,}]{alignat = 2}
    \tau v_t &= \varepsilon\Delta v - \frac{1}{2\varepsilon} F'(v) - \alpha'_0(v)|\nabla u|, \label{EG1}  \\
    \alpha_w(v) u_t &= \operatorname{div} \left( \alpha_0(v) \frac{\nabla u}{|\nabla u|} \right). \label{EG2}
\end{empheq}
An explicit form in \cite{KWC1} corresponds to the case when $s=\varepsilon^2$, $\tau_1=\tau\varepsilon$, $F(v)=(v-1)^2$, $\alpha_0(v)=\varepsilon v^2$, $\alpha_w(v)=\tau_0 v^2/\varepsilon$ with $\tau_0>0$.
 In other words,
\begin{empheq}[left = {\empheqlbrace \,}]{alignat = 2}
    \tau_1 v_t &= s\Delta v + (1-v) -2sv|\nabla u|, \label{EKWC1}  \\
    \tau_0 v^2 u_t &= s\operatorname{div} \left( v^2 \frac{\nabla u}{|\nabla u|} \right). \label{EKWC2}
\end{empheq}
The function $v$ represents an order parameter.
 Where $v=1$ corresponds to a grain region, and where $v$ is away from $1$ corresponds to grain boundaries.
 The function $u$ represents a structure-like averaged angle in each grain.

We are interested in a singular limit problem for \eqref{EG1}, \eqref{EG2} as $\varepsilon\downarrow0$.
 It turns out that the correct scaling of time should be $\tau=\tau_1/\varepsilon$, while $\tau_1$ is independent of $\varepsilon$.
 Since the system \eqref{EG1}--\eqref{EG2} is regarded as a gradient flow of $E_\mathrm{KWC}^\varepsilon(u,v)$ of \eqref{EEKWC}, we are tempting to expect that the limit flow is the gradient flow of its limit energy $E_\mathrm{KWC}^0$ which was obtained in our papers \cite{GOU,GOSU}.
 Surprisingly, this conjecture is wrong.
 The limit flow contains a fractional time derivative.
 In this paper, we consider the problem in a one-dimensional setting.
 Moreover, we consider a special but typical case when the problem is essentially reduced to a single equation for $v$ in \eqref{EG1} because handing \eqref{EG2} is technically involved since it is a total variation flow type equation.
 This reduced problem becomes a linear problem and is easy to discuss.

We consider \eqref{EG1}--\eqref{EG2}, where $\Omega$ is an interval $\Omega=(-L,L)$ and impose the Dirichlet boundary condition for $u$ and the Neumann boundary condition for $v$.
 More precisely,
\begin{equation} \label{EDi}
	u(-L,t) = 0, \quad u(L,t) = b > 0
\end{equation}
while
\begin{equation} \label{ENe}
	v_x(\pm L,t) = 0 \quad\text{for}\quad t > 0.
\end{equation}
We set
\begin{equation} \label{EEST}
	F(v) = a^2(v-1)^2 \quad\text{with}\quad a \geq 0, 
	\quad \alpha_0(v) = v^2.
\end{equation}
We expect that the function
\begin{eqnarray*}
	u^b(x) = \left\{
\begin{array}{ll}
	b, & x > 0, \\
	0, & x < 0
\end{array}
\right.
\end{eqnarray*}
with $b>0$ solves \eqref{EG2}.
 Since equation \eqref{EG2} is of total variation flow type, the definition of a solution is not obvious.
 Fortunately, under a suitable assumption of $v$, the function $u^b$ solves \eqref{EG2} under \eqref{EDi}, as shown in the following lemma by setting $\beta=\alpha_0(v)$.
\begin{lemma}[A stationary solution] \label{LST}
Assume that $\beta\in C[-L,L]$ satisfies
\[
	\beta(0) \leq \beta(x) \quad\text{for all}\quad
	x \in (-L,L).
\]
Then $u^b$ solves
\[
	\left( \beta \frac{u_x}{|u_x|} \right)_x = 0
	\quad\text{in}\quad (-L,L)
\]
under \eqref{EDi}.
\end{lemma}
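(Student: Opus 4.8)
The plan is to read the equation $\bigl(\beta\,u_x/|u_x|\bigr)_x = 0$ as the Euler--Lagrange inclusion for the weighted total variation functional $\Phi(u) := \int_{-L}^{L}\beta\,d|Du|$, whose formal first variation is exactly the displayed left-hand side. Accordingly, I would declare that $u\in BV(-L,L)$ solves the equation under \eqref{EDi} if there is a scalar Cahn--Hoffman field $z$ with: (i) $z_x=0$ in $\mathcal{D}'(-L,L)$, so that $z$ realizes $\beta\,u_x/|u_x|$; (ii) $|z(x)|\le\beta(x)$ for all $x\in(-L,L)$; and (iii) the alignment $z\,Du = \beta\,|Du|$ as measures. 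Conditions (ii)--(iii) encode the pointwise inclusion $z\in\partial(\beta|\cdot|)(u_x)$: where $u_x\neq0$ one needs $z=\beta\,\mathrm{sign}(u_x)$, while where $u_x=0$ only $|z|\le\beta$ remains.

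Since $u^b$ is constant on $(-L,0)$ and on $(0,L)$ with a single upward jump of height $b>0$ at the origin, one has $Du^b = b\,\delta_0$ and $|Du^b| = b\,\delta_0$. I would then take the constant field $z\equiv\beta(0)$. Condition (i) is immediate. For (iii), $z$ is continuous, so its value at the jump is unambiguous and $z\,Du^b = \beta(0)\,b\,\delta_0 = \beta\,|Du^b|$ because $|Du^b|$ is concentrated at the origin; the positivity of the jump is consistent with the required value $\beta(0)\,\mathrm{sign}(u^b_x)=\beta(0)$ there.

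Condition (ii) is where the hypothesis is used: off the origin $u^b_x=0$, so only $|z|\le\beta$ is needed, and since $\beta\ge0$ this reads $\beta(0)=|z(x)|\le\beta(x)$ for every $x$, which is precisely the assumed inequality $\beta(0)\le\beta(x)$. Hence all three conditions hold and $u^b$ solves the equation. I expect no real difficulty in the verification itself; the only delicate point is conceptual, namely fixing a weak formulation that makes the singular term $\beta\,u_x/|u_x|$ and the pairing $z\,Du$ meaningful at the jump. As a cross-check I would record the equivalent minimization statement: any competitor $w\in BV(-L,L)$ with $w(-L)=0$ and $w(L)=b$ satisfies $\int_{-L}^{L}\beta\,d|Dw|\ge\beta(0)\,|Dw|\bigl((-L,L)\bigr)\ge\beta(0)\,b=\int_{-L}^{L}\beta\,d|Du^b|$, so $u^b$ minimizes $\Phi$ under \eqref{EDi}, confirming that $\beta(0)\le\beta(x)$ is exactly the condition placing the optimal jump at $x=0$.
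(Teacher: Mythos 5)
Your proposal is correct and follows essentially the same route as the paper: Section \ref{S4} of the paper also interprets the equation via a Cahn--Hoffman field and verifies that the constant field with flux value $\beta(0)$ works, with the constraint $|z|\le\beta$ (in the paper's normalization, $|z|\le 1$ for the flux $\beta z$) being exactly where the hypothesis $\beta(0)\le\beta(x)$ enters, and your alignment condition $z\,Du^b=\beta\,|Du^b|$ playing the role of the paper's energy identity \eqref{ECah3}. The only cosmetic difference is that the paper encodes the Dirichlet data \eqref{EDi} through relaxed boundary-penalty terms in the energy rather than as a strict trace constraint, but since $u^b$ attains the boundary values these terms vanish and the two verifications coincide.
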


We stress that the notion of a solution of the equation for $u$ in Lemma \ref{LST} and also \eqref{EG2} under the Dirichlet condition \eqref{EDi} is not obvious and will be discussed in Section \ref{S4}.

Problem \eqref{EG1}--\eqref{EG2} is reduced to
\begin{equation} \label{ER}
	\frac{\tau_1}{\varepsilon} v_t = \varepsilon v_{xx} - \frac{a^2(v-1)}{\varepsilon} - 2b\partial_x(1_{x>0})v \quad\text{in}\quad \Omega \times (0,\infty)
\end{equation}
under the boundary condition
\begin{equation} \label{ERB}
	v_x(\pm L, t) = 0, \quad t > 0
\end{equation}
and the initial condition
\begin{equation} \label{ERI}
	v(x,0) = v_0^\varepsilon(x), \quad x \in \Omega,
\end{equation}
where $1_{x>0}$ is a characteristic function of $(0,+\infty)$, i.e., the Heaviside function, so that its distributional derivative equals the Dirac $\delta$ function.
 By a scaling transformation $y=x/\varepsilon$, \eqref{ER} becomes
\begin{equation} \label{ERR}
	\tau_1 V_t = V_{yy} - a^2(V-1) - 2b\partial_y(1_{y>0})V
\end{equation}
in $(-L/\varepsilon,L/\varepsilon)\times(0,\infty)$ for $V=V^\varepsilon(y,t)=v^\varepsilon(\varepsilon y,t)$, where $v^\varepsilon$ is a solution of \eqref{ER}.
 Thus, we expect this limit $V=\lim_{\varepsilon\to0}V^\varepsilon$ solves \eqref{ERR} on $\mathbf{R}$ and is bounded.
 Since the solution $v^\varepsilon$ of \eqref{ER} is expected to converge to $1$ except at $x=0$, we are interested in the behaviour of $\xi^\varepsilon(t)=v^\varepsilon(0,t)$.
 More precisely, we would like to find the equation which $\xi=\lim_{\varepsilon\to0}\xi^\varepsilon$ solves.
 Let $E_\mathrm{sMM}^{0,b}(\zeta)$ be a limit energy obtained by \cite{GOU} for a set-valued function $\Xi$, defined as $\Xi(x)=\{1\}$ for $x\neq0$ and $\Xi(0)=[\zeta,1]$, $\zeta\in(0,1)$.
 In other words,
\begin{align*}
	E_\mathrm{sMM}^{0,b}(\zeta) &= b\zeta^2 + 2G(\zeta)= b\zeta^2 + a(\zeta-1)^2,\quad\text{where}\quad G(\zeta) = \int_1^\zeta \sqrt{F(\rho)}\, d\rho.
\end{align*}
A key observation is to derive an equation for $\xi$.
 For $\alpha\in\mathbf{R}$, $\beta>0$, we set
\[
	f_\beta^\alpha(t) := \frac{e^{-\alpha t} t^{\beta-1}}{\Gamma(\beta)}.
\]
We consider well-prepared initial data in the sense that it is continuous and solves \eqref{ERR} outside $y=0$.
\begin{lemma}[Limit equation] \label{LLE}
Assume that $\tau_1=1$.
 Let $V$ be the bounded solution of \eqref{ERR} in $\mathbf{R}\times(0,\infty)$ with well-prepared initial data $V(y,0)=1-ce^{-a|y|}$ with some $c\in\mathbf{R}$.
 Then $\xi(t)=V(0,t)$ solves
\begin{equation} \label{ELE}
	\int_0^t m_a(t-s) \xi_s(s)\, ds = -\operatorname{grad} E_\mathrm{sMM}^{0,b}(\xi)
\end{equation}
with
\[
	m_a(t) = 2\left\{f_{1/2}^{a^2}(t) + a^2\int_0^t f_{1/2}^{a^2}(s)\, ds  - a \right\}.
\]
Moreover, $m_a'(t)<0$ and $m_a(t)>0$, and $\lim_{t\to\infty}m_a(t)=0$.
\end{lemma}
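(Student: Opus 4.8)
The plan is to use that the singular forcing $2b\,\partial_y(1_{y>0})V$ equals $2b\,\xi(t)\,\delta_0$ and therefore acts only at $y=0$, so that the trace $\xi(t)=V(0,t)$ is slaved to the Dirichlet-to-Neumann map of the bulk operator $\partial_t-\partial_{yy}+a^2$ on a half-line. This map is nonlocal in time, and it is exactly the origin of the memory kernel $m_a$ and of the fractional-derivative structure of \eqref{ELE}. First I would set $W=V-1$, so that \eqref{ERR} (with $\tau_1=1$) reads $W_t=W_{yy}-a^2W-2b\xi(t)\delta_0$, with $\xi=1+W(0,\cdot)$ and $W(y,0)=-ce^{-a|y|}$. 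Because the data and the equation are even in $y$, the bounded solution is even; integrating across $y=0$ gives the jump relation $W_y(0^+,t)-W_y(0^-,t)=2b\xi(t)$, and evenness reduces this to the flux condition $W_y(0^+,t)=b\xi(t)$. It then suffices to analyse, on $\{y>0\}$, the homogeneous equation $W_t=W_{yy}-a^2W$ with the steady initial profile $W(y,0)=-ce^{-ay}$, and to read off the Neumann trace from the Dirichlet trace $\eta:=W(0,\cdot)=\xi-1$.

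Next I would Laplace transform in $t$. Writing $q=\sqrt{\lambda+a^2}$, the transformed profile is the bounded solution of $\widehat W_{yy}-(\lambda+a^2)\widehat W=ce^{-ay}$, namely $\widehat W=Be^{-qy}-\frac{c}{\lambda}e^{-ay}$; matching the Dirichlet trace gives $B=\widehat\eta+c/\lambda$, and differentiation at $y=0$ yields the Dirichlet-to-Neumann relation $\widehat W_y(0,\lambda)=-q\,\widehat\eta-c(q-a)/\lambda$. Here the exponential well-prepared datum is essential: it is the steady state of the bulk operator, so the particular solution is again a single exponential and no extra time-dependent boundary term survives.

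A direct computation from the definition gives $\widehat m_a(\lambda)=2(\sqrt{\lambda+a^2}-a)/\lambda$, i.e.\ $q-a=\tfrac12\lambda\,\widehat m_a(\lambda)$; substituting this and using $\xi(0)=1-c$, so that $\lambda\widehat\eta+c=\lambda\widehat\xi-\xi(0)$ equals the transform of $\xi_s$, the relation collapses to $\widehat W_y(0,\lambda)=-a\widehat\eta-\tfrac12\widehat m_a(\lambda)\bigl(\lambda\widehat\xi-\xi(0)\bigr)$. Inverting and equating with the flux $W_y(0^+,\cdot)=b\xi$, and using $\eta=\xi-1$ together with $\operatorname{grad}E_{\mathrm{sMM}}^{0,b}(\xi)=2b\xi+2a(\xi-1)$, produces exactly \eqref{ELE}. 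For the stated properties I would use $\frac{d}{dt}\operatorname{erfc}(a\sqrt t)=-a\,f_{1/2}^{a^2}(t)$ to rewrite $m_a(t)=2\bigl(f_{1/2}^{a^2}(t)-a\operatorname{erfc}(a\sqrt t)\bigr)$, from which $\lim_{t\to\infty}m_a(t)=0$ is immediate; differentiating, the $f_{1/2}^{a^2}$ contributions cancel and leave $m_a'(t)=-e^{-a^2t}t^{-3/2}/\sqrt\pi<0$, whence $m_a$ is strictly decreasing to $0$ and therefore positive.

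The main obstacle is not this formal computation but the rigorous justification of the singular formulation: one must show that the bounded distributional solution of \eqref{ERR} with the $\delta$-forcing genuinely satisfies the jump relation of the first step and admits the Laplace representation used above, with enough regularity and decay of $\xi$ to legitimise the transform, the trace across $y=0$, and the inversion. Establishing existence, boundedness, and trace regularity of $V$ — together with checking that the even reflection faithfully reproduces the singular equation on $\mathbf{R}$ — is where the real work lies.
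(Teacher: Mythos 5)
Your proposal is correct and follows essentially the same route as the paper: set $w=V-1$, take the Laplace transform in time, solve the resulting constant-coefficient ODE explicitly, match at $y=0$, recognise $\mathcal{L}[m_a](\lambda)=2\left(\sqrt{\lambda+a^2}-a\right)/\lambda$, and invert. Your two deviations are cosmetic rather than substantive: you phrase the matching as a half-line flux (Dirichlet-to-Neumann) condition $W_y(0^+,t)=b\xi(t)$, which by even reflection is equivalent to the paper's whole-line $\delta$-matching (cf.\ the discussion around Corollary \ref{CCap}), and you verify the kernel properties via the representation $m_a(t)=2\left(f_{1/2}^{a^2}(t)-a\operatorname{erfc}\left(a\sqrt{t}\right)\right)$, which the paper itself derives later as \eqref{EMERF}, instead of differentiating $m_a$ directly as in Lemma \ref{LLap}.
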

The assumption $\tau_1=1$ is just for the convenience of presentation.
 The formula for general $\tau_1$ is obtained by rescaling the time variable $t$ by $t'=t\tau_1$.

Note that in case $a=0$, the left-hand side of \eqref{ELE} becomes the Caputo derivative $\partial_t^{1/2}$.
 For even initial data, the equation \eqref{ERR} in $\mathbf{R}\times(0,\infty)$ is reduced to the Robin boundary problem in $(-\infty,0)\times(0,\infty)$ with
\[
	\partial_y V(0,t) + bV(0,t) = 0.
\]
The Caputo derivative appears in the equation of the boundary value.
\begin{cor} \label{CCap}
Let $w=w(x,t)$ be the bounded solution of the heat equation
\[
	w_t - w_{xx} = 0
	\quad\text{in}\quad (-\infty,0) \times (0,\infty)
\]
with the Robin boundary condition
\[
	w_x(0,t) + bw(0,t) = 0
	\quad\text{for}\quad t > 0.
\]
Assume that $w(x,0)=-c$ for some $c\in\mathbf{R}$; the boundary value $\xi(t)=w(0,t)$ solves
\[
	\int_0^t f_{1/2}^0 (t-s) \xi_s(s)\,ds
	= -b\xi(t), \quad t > 0.
\]
In other words, $\partial_t^{1/2}\xi=-b\xi$, where $\partial_t^{1/2}$ is the Caputo half derivative.
\end{cor}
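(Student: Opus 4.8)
The plan is to pass to the Laplace transform in the time variable, which converts the half-line heat problem into an elementary second-order ODE in $x$ and makes the map from the boundary datum to its value fully explicit. Writing $\hat w(x,\lambda)=\int_0^\infty e^{-\lambda t}w(x,t)\,dt$ for $\operatorname{Re}\lambda>0$ and using the initial condition $w(x,0)=-c$, the equation $w_t=w_{xx}$ transforms into $\hat w_{xx}-\lambda\hat w=c$. Its general solution that stays bounded as $x\to-\infty$ is
\[
	\hat w(x,\lambda)=A(\lambda)e^{\sqrt\lambda\,x}-\frac c\lambda,
\]
where $\sqrt\lambda$ denotes the branch with positive real part, so that the growing exponential $e^{-\sqrt\lambda\,x}$ is discarded; the boundedness as $x\to-\infty$ is exactly what forces this branch choice.

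Next I would impose the transformed Robin condition $\hat w_x(0,\lambda)+b\,\hat w(0,\lambda)=0$ to fix the constant $A$. This gives $A(\sqrt\lambda+b)=bc/\lambda$, hence $A=bc/[\lambda(\sqrt\lambda+b)]$, and therefore
\[
	\hat\xi(\lambda)=\hat w(0,\lambda)=A-\frac c\lambda=\frac{-c}{\sqrt\lambda\,(\sqrt\lambda+b)}.
\]

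The heart of the argument is then to recognize this closed form as the Laplace transform of the claimed Caputo equation. Since $f_{1/2}^0(t)=t^{-1/2}/\Gamma(1/2)$ has Laplace transform $\lambda^{-1/2}$, the convolution theorem shows that the transform of the left-hand side of the asserted identity equals $\lambda^{-1/2}\bigl(\lambda\hat\xi-\xi(0)\bigr)$. Reading off $\xi(0)=w(0,0)=-c$ from the formula above (the small-$t$ expansion $\hat\xi+c/\lambda\sim cb\,\lambda^{-3/2}$ shows that $\xi$ is continuous at $t=0$ with $\xi(0)=-c$), a direct simplification yields $\lambda^{-1/2}(\lambda\hat\xi+c)=-b\hat\xi$, which is precisely the transform of $-b\xi$. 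By uniqueness of the Laplace transform this establishes $\int_0^t f_{1/2}^0(t-s)\xi_s(s)\,ds=-b\xi(t)$, i.e. $\partial_t^{1/2}\xi=-b\xi$.

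The main obstacle is one of rigor rather than computation: one must justify that $w$ and $\xi$ are Laplace-transformable and that the transform commutes with the differentiations used. The relevant point is the mild regularity of $\xi$ at the origin, namely $\xi(t)+c\sim (2cb/\sqrt\pi)\,t^{1/2}$, so that $\xi_s$ is locally integrable near $t=0$ and the Caputo integral converges. Alternatively, the corollary can be obtained by specializing Lemma \ref{LLE} to $a=0$, where $m_0(t)=2f_{1/2}^0(t)$ and $\operatorname{grad}E_{\mathrm{sMM}}^{0,b}(\xi)=2b\xi$: after matching the constant initial data and identifying $w=V$ on $(-\infty,0)$ (the even whole-line problem \eqref{ERR} reduces to the half-line Robin problem because the $\delta$-term forces $V_y(0^-,t)=-b\,V(0,t)$), dividing \eqref{ELE} by $2$ reproduces $\partial_t^{1/2}\xi=-b\xi$. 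The direct transform computation above is, however, self-contained and avoids tracking the constant relabelling between $V$ and $w$.
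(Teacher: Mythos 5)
Your proof is correct, and the computations check out: $\hat\xi(\lambda)=-c/\bigl[\sqrt\lambda\,(\sqrt\lambda+b)\bigr]$ does satisfy $\lambda^{-1/2}(\lambda\hat\xi+c)=-b\hat\xi$, and the $t^{1/2}$ behaviour of $\xi+c$ near $t=0$ justifies the convergence of the Caputo integral (one can even invert exactly, $\xi(t)=-c\,e^{b^2t}\operatorname{erfc}(b\sqrt t)$, consistent with \eqref{ELERR}). The difference from the paper is organizational rather than technical: the paper gives no separate computation for this corollary, but obtains it by specializing Lemma \ref{LLE} to $a=0$ via the even-reflection remark preceding the statement --- for even data the $\delta$-term in \eqref{ERR} forces $V_y(0^-,t)+bV(0,t)=0$, so $V$ restricted to $(-\infty,0)$ is the Robin solution, and \eqref{ELE} with $m_0=2f_{1/2}^0$ and $\operatorname{grad}E_\mathrm{sMM}^{0,b}(\xi)=2b\xi$ becomes the Caputo equation after dividing by $2$ --- which is exactly the alternative you sketch in your final paragraph. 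Your primary argument instead redoes the Laplace-transform computation (the same method as the paper's proof of Lemma \ref{LLE}) directly on the half-line Robin problem. What your version buys: it is self-contained, avoids the whole-line $\delta$-source bookkeeping and the shift $w=V-1$ (note it is $V$ itself, not $V-1$, that satisfies the homogeneous Robin condition, so the lemma-based route needs the initial-datum matching $c_{\mathrm{Lemma}}=1+c_{\mathrm{Cor}}$ together with uniqueness of bounded solutions to identify $w$ with the restriction of $V$), and it makes explicit the regularity of $\xi$ at $t=0$ needed for the Caputo integral. What the paper's route buys: the corollary comes for free once Lemma \ref{LLE} is proved, with no computation to repeat.
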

It is well known that the fractional Laplace operator $(-\Delta)^{1/2}$ arises as the Dirichlet--Neumann map of the Laplace equation.
 Here, the Caputo derivative $\partial_t^{1/2}$ is obtained as the Dirichlet--Neumann map of the heat equation.
 Formally, it is easy to guess since the Robin boundary condition yields $\partial_t^{1/2}\xi+b\xi=0$ by replacing $\partial_x$ with $\partial_t^{1/2}$, which is natural since $\partial_t=\partial_x^2$ for $w$.
 In a seminal paper, Caffarelli and Silvestre \cite{CS} show that $(-\Delta)^\gamma$ ($0<\gamma<1$) is obtained as the Dirichlet--Neumann map for the degenerate Laplace equation.
 We remark that $\partial_t^\gamma$ is obtained as the Dirichlet--Neumann map of a degenerate heat equation $w_t-(-x)^\alpha w_{xx}=0$ with $\gamma=1/(2-\alpha)$ as in \cite{CS}; see Remark \ref{RCS} at the end of Section \ref{S2}.
 We do not pursue this problem in this paper. 
 
Since the equation is linear and of constant coefficients, we can use the Laplace transform to obtain the desired equation.
 As we expect, proving the convergence of $V^\varepsilon$ and $v^\varepsilon$ is not tricky.
\begin{lemma}[Convergence] \label{LCon}
Let $V$ be the bounded solution of \eqref{ERR} in $\mathbf{R}\times(0,\infty)$ with initial data $V_0\in C(\mathbf{R})\cap L^\infty(\mathbf{R})$.
 Let $v^\varepsilon$ be the solution of \eqref{ER} under \eqref{ERB} and \eqref{ERI}.
 Assume that $V^\varepsilon(x,0)\in C\left[-L/\varepsilon,L/\varepsilon\right]$ converges to $V_0$ in the sense that
\[
	\lim_{\varepsilon\downarrow 0} \sup_{|x|\leq L/\varepsilon} \left| V^\varepsilon(x,0) - V_0(x) \right| = 0.
\]
 Then $V^\varepsilon$ converges to $V$ locally uniformly in $\mathbf{R}\times[0,\infty)$.
 In particular, $v^\varepsilon$ converges to $1$ locally uniformly in $\left(\mathbf{R}\backslash\{0\}\right)\times[0,\infty)$, and $\xi^\varepsilon$ converges to $\xi(t)=V(0,t)$ locally uniformly in $[0,\infty)$.
\end{lemma}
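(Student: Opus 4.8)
The plan is to prove the convergence directly by a comparison (barrier) argument, exploiting that both $V^\varepsilon$ and $V$ solve the \emph{same} linear equation \eqref{ERR}: $V^\varepsilon$ on the expanding interval $(-L/\varepsilon,L/\varepsilon)$ under the Neumann condition \eqref{ERB}, and $V$ on all of $\mathbf{R}$. Writing $R=L/\varepsilon$ and restricting $V$ to $(-R,R)$, the difference $W^\varepsilon:=V^\varepsilon-V$ solves the \emph{homogeneous} problem $\tau_1 W_t=W_{yy}-a^2W-2b\,\partial_y(1_{y>0})W$ on $(-R,R)\times(0,\infty)$, the constant source $a^2$ having cancelled. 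As throughout, the singular term is read as the transmission condition $W_y(0^+,t)-W_y(0^-,t)=2bW(0,t)$ across $y=0$, while away from $y=0$ the equation is a heat equation with the absorption $-a^2W$. The goal is then to bound $W^\varepsilon$ on a fixed compact set and show the bound tends to $0$ as $\varepsilon\downarrow0$.

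The first step is a comparison principle for this transmission problem on the bounded interval $(-R,R)$. Because $b>0$, a positive interior maximum of a subsolution cannot occur at $y=0$: there one would have $W_y(0^+)-W_y(0^-)\le0$, contradicting $W_y(0^+)-W_y(0^-)=2bW(0)>0$; away from $y=0$ the sign of the absorption rules out an interior maximum in the usual way. (The precise notion of solution and this comparison principle are those to be developed in Section~\ref{S4}.) Hence it suffices to dominate $W^\varepsilon$ on the parabolic boundary of $(-R,R)\times(0,T]$ by a supersolution, which I construct by splitting the boundary data. Let $\eta_\varepsilon:=\sup_{|y|\le R}|V^\varepsilon(y,0)-V_0(y)|\to0$, and let $M$ bound $\sup|V|$ and $\sup_\varepsilon\|V^\varepsilon\|_{L^\infty}$, the latter uniform bound itself a consequence of the comparison principle. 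The constant $\eta_\varepsilon$ is a supersolution and dominates the initial error, while
\[
	\Phi(y,t):=2M\,\frac{\cosh(\kappa y)}{\cosh(\kappa R)}\,e^{\gamma t},\qquad \gamma:=\kappa^2/\tau_1,
\]
is a supersolution for every $\kappa>0$: away from $0$ one has $\tau_1\Phi_t-\Phi_{yy}+a^2\Phi=a^2\Phi\ge0$, and at $y=0$ the even function $\cosh(\kappa y)$ gives $\Phi_y(0^+)-\Phi_y(0^-)=0\le 2b\Phi(0)$. Moreover $\Phi(\pm R,t)\ge2M\ge|W^\varepsilon(\pm R,t)|$ on the lateral boundary and $\Phi(y,0)\ge0$. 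Thus $\eta_\varepsilon+\Phi$ dominates $W^\varepsilon$ on the whole parabolic boundary, and the comparison principle yields
\[
	|V^\varepsilon(y,t)-V(y,t)|\le \eta_\varepsilon+2M\,\frac{\cosh(\kappa y)}{\cosh(\kappa R)}\,e^{\gamma t}\quad\text{on}\quad (-R,R)\times[0,T].
\]
On $|y|\le K_0$, $t\le T$ the right-hand side is at most $\eta_\varepsilon+2M\cosh(\kappa K_0)e^{\gamma T}/\cosh(\kappa R)$, which tends to $0$ as $R=L/\varepsilon\to\infty$; this is the asserted local uniform convergence $V^\varepsilon\to V$.

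The convergence of $\xi^\varepsilon$ is then immediate, since $\xi^\varepsilon(t)=V^\varepsilon(0,t)$, $\xi(t)=V(0,t)$, and $\{0\}\times[0,T]$ is compact, so local uniform convergence on $\mathbf{R}\times[0,\infty)$ gives uniform convergence on $[0,T]$. For the claim $v^\varepsilon=V^\varepsilon(x/\varepsilon,t)\to1$ on compact subsets of $(\Omega\setminus\{0\})\times[0,\infty)$, write $|v^\varepsilon-1|\le|V^\varepsilon(x/\varepsilon,t)-V(x/\varepsilon,t)|+|V(x/\varepsilon,t)-1|$. On $\{d\le|x|\le L''\}\times[0,T]$ with $0<d<L''<L$, the first term is controlled by the displayed estimate at $y=x/\varepsilon$, where $\cosh(\kappa y)/\cosh(\kappa R)\le e^{\kappa(|x|-L)/\varepsilon}\to0$ since $|x|<L$; the second term tends to $0$ because $V(y,t)\to1$ as $|y|\to\infty$ locally uniformly in $t$. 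For this last point one uses the far-field behaviour $V_0(y)\to1$ as $|y|\to\infty$ (as for the well-prepared data of Lemma~\ref{LLE}) together with a barrier estimate built on the absorption $-a^2(V-1)$; indeed, without $V_0\to1$ the value $v^\varepsilon(x,0)=V^\varepsilon(x/\varepsilon,0)$ cannot approach $1$, so this far-field condition is exactly what makes the second assertion hold.

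The main obstacle I anticipate is not the barrier bookkeeping, which is routine once the comparison principle is available, but the rigorous underpinning of the first step: giving a precise meaning to \eqref{ERR} with the Dirac term—equivalently, to the transmission condition $V_y(0^+,t)-V_y(0^-,t)=2bV(0,t)$—and proving the comparison principle together with the uniform-in-$\varepsilon$ $L^\infty$ bound on $V^\varepsilon$ in that setting. These rest on the solution concept of Section~\ref{S4}; granting them, the convergence is, as claimed, straightforward.
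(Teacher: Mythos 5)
Your proof is correct in substance, but it takes a genuinely different route from the paper's. The paper never confronts the Dirac term head-on: it splits $V^\varepsilon$ and $V$ into odd and even parts, notes that for the odd part the singular term is inert (odd functions vanish at $y=0$), leaving a Neumann heat problem, while for the even part it becomes the Robin condition $V_y(0,t)+bV(0,t)=0$ on $(-L/\varepsilon,0)$; it then smooths the initial data (by mollification for the odd part, by the analytic semigroup $e^{-\delta A}$ for the even part), gets uniform bounds on $V_\delta^\varepsilon$, $\partial_y V_\delta^\varepsilon$, $\partial_t V_\delta^\varepsilon$ from the maximum principle, and concludes via Arzel\`a--Ascoli together with uniqueness of the bounded limit solution --- a compactness argument with no rate. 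You instead compare $V^\varepsilon$ and $V$ directly on $(-L/\varepsilon,L/\varepsilon)$, reading the Dirac term as the transmission condition $W_y(0^+,t)-W_y(0^-,t)=2bW(0,t)$, and dominate the difference by the explicit supersolution $\eta_\varepsilon+2M\cosh(\kappa y)e^{\gamma t}/\cosh(\kappa R)$. This is shorter and quantitative: it yields the rate $\eta_\varepsilon+2M\cosh(\kappa K_0)e^{\gamma T}/\cosh(\kappa R)$ on $|y|\le K_0$, which the paper's compactness argument cannot produce, and the same estimate at $y=x/\varepsilon$ handles the passage to $v^\varepsilon$ cleanly. Two caveats. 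First, your only non-off-the-shelf ingredient --- the comparison principle for the $\delta$-transmission problem --- is \emph{not} supplied by Section~\ref{S4}, which concerns the Dirichlet problem for the total variation flow \eqref{EG2}, not equation \eqref{ERR}; you must prove it yourself. Your sketch is the right argument (at an interior maximum at $y=0$ one has $W_y(0^+)-W_y(0^-)\le 0$, contradicting the subsolution inequality $W_y(0^+)-W_y(0^-)\ge 2bW(0)>0$), and it goes through for the piecewise-smooth solutions at hand; alternatively, the paper's own odd/even reduction would hand you comparison from the classical Neumann/Robin theory. Second, your remark on the assertion $v^\varepsilon\to1$ is apt: it genuinely requires the far-field behaviour $V_0(y)\to1$ as $|y|\to\infty$, which is absent from the hypotheses (for $V_0\equiv 0$ one gets $v^\varepsilon(x,t)\to 1-e^{-a^2t/\tau_1}\ne 1$ for fixed $x\ne0$, $t>0$), and the paper's own proof is in fact silent on this part of the statement; your treatment, conditional on that far-field hypothesis plus the boundary-layer bound $\cosh(\kappa x/\varepsilon)/\cosh(\kappa L/\varepsilon)\le e^{\kappa(|x|-L)/\varepsilon}$, is the more complete one on this point.
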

Applying Lemma \ref{LLE} and Lemma \ref{LCon}, we can obtain a characterisation of the limit equation.
\begin{thm} \label{Tmain}
Assume that $\tau_1=1$.
 Let $v^\varepsilon$ be the solution of \eqref{ER} under \eqref{ERB} and \eqref{ERI}.
 Assume that $v_0^\varepsilon$ is well-prepared in the sense
\[
	\sup_{x\in\Omega}\left|v_0^\varepsilon(x)-\left(1-ce^{-a|x|/\varepsilon}\right)\right|\rightarrow0
\]
as $\varepsilon\to0$ with some $c$ independent of $\varepsilon$.
 Then $\xi^\varepsilon(t)$ converges to $\xi$ locally uniformly in $[0,\infty)$, and $\xi$ solves \eqref{ELE}.
 Moreover, the graph of $v_\varepsilon$ converges to a set-valued function $\Xi$ of the form
\begin{equation*}
	\Xi(x,t) = \left\{
\begin{array}{l}
	\{1\} \quad\text{for}\quad  x = 0, \\
	\text{closed interval between}\ \xi(t)\ \text{and}\ 1\ \text{for}\ x \neq 0.
\end{array}
\right.
\end{equation*}
The convergence is in the sense of the Hausdorff distance of graphs over $[-L,L]\times[0,T]$ for any $T>0$.
\end{thm}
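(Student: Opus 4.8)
The plan is to read off the first two assertions from Lemma~\ref{LCon} and Lemma~\ref{LLE}, leaving the graph convergence as the only genuinely new work. First I would pass to the stretched variable $y=x/\varepsilon$, so that $V^\varepsilon(y,t)=v^\varepsilon(\varepsilon y,t)$ solves \eqref{ERR} on $(-L/\varepsilon,L/\varepsilon)\times(0,\infty)$ with $\tau_1=1$. The well-prepared hypothesis is designed precisely so that, after rescaling,
\[
	\sup_{|y|\le L/\varepsilon}\left|V^\varepsilon(y,0)-\left(1-ce^{-a|y|}\right)\right|
	=\sup_{x\in\Omega}\left|v_0^\varepsilon(x)-\left(1-ce^{-a|x|/\varepsilon}\right)\right|\longrightarrow0,
\]
so the rescaled initial data converge uniformly to $V_0(y)=1-ce^{-a|y|}\in C(\mathbf{R})\cap L^\infty(\mathbf{R})$. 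Lemma~\ref{LCon} then yields $V^\varepsilon\to V$ locally uniformly, with $V$ the bounded solution of \eqref{ERR} with this $V_0$, together with $v^\varepsilon\to1$ locally uniformly away from $x=0$ and $\xi^\varepsilon\to\xi=V(0,\cdot)$ locally uniformly on $[0,\infty)$; this is the first assertion. Since $V_0$ is exactly the well-prepared datum of Lemma~\ref{LLE} (one checks directly that $V_{yy}-a^2(V-1)=0$ for $y\ne0$), that lemma gives that $\xi$ solves \eqref{ELE}, the second assertion.

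It remains to prove the graph convergence, and for this I would first record the qualitative structure of $v^\varepsilon$ and of $V$. By evenness of the data and of the equation, whose singular term is $-2b\,\delta(y)\,V(0,t)$, both $v^\varepsilon(\cdot,t)$ and $V(\cdot,t)$ are even, and integrating the $\delta$-term across $y=0$ gives the transmission condition $V_y(0^+,t)=bV(0,t)$. Using this together with a comparison argument based on the maximum principle for the half-line problem on $\{y>0\}$ (applied to $\phi=1-V$), I would show that, for the relevant sign of $c$, $v^\varepsilon(\cdot,t)$ stays confined between $\xi^\varepsilon(t)$ and $1$ and is monotone in $|x|$; the same for $V$, whose profile is monotone in $|y|$ with $V(0,t)=\xi(t)$ and $V(y,t)\to1$ as $|y|\to\infty$, so that its range is exactly the closed interval between $\xi(t)$ and $1$. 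Because the problem is linear with constant coefficients, these facts can alternatively be extracted from the explicit Laplace-transform representation of $V$.

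With these in hand the graph convergence over $[-L,L]\times[0,T]$ follows by splitting the spatial variable at a small scale $\rho>0$. On $\{\rho\le|x|\le L\}\times[0,T]$ the convergence $v^\varepsilon\to1$ is uniform, so this part of $\mathrm{graph}\,v^\varepsilon$ lies within $o(1)$ of $\{(x,t,1)\}$, matching the singleton branch of $\Xi$ away from the origin, and conversely every $(x,t,1)$ with $|x|\ge\rho$ is approximated. On $\{|x|<\rho\}$ I would use the stretched picture: for $x=\varepsilon y$ with $|y|\le R$ one has $v^\varepsilon(\varepsilon y,t)=V^\varepsilon(y,t)\to V(y,t)$ uniformly on $[-R,R]\times[0,T]$, and as $y$ sweeps $[0,R]$ the monotone profile $V(y,t)$ sweeps $[\xi(t),V(R,t)]$ with $V(R,t)\to1$; since $x=\varepsilon y\to0$, these graph points accumulate on the whole vertical segment $\{0\}\times\{t\}\times[\xi(t),1]$, i.e. the filled branch of $\Xi$ at $x=0$. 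The reverse inclusion uses the confinement $v^\varepsilon\in[\xi^\varepsilon,1]$, which keeps every graph point over $|x|<\rho$ in a neighbourhood of $[\xi(t),1]$, together with the intermediate value theorem to realise each prescribed height. Letting first $\varepsilon\downarrow0$, then $R\to\infty$ and $\rho\downarrow0$, and invoking uniform continuity of $\xi$ on $[0,T]$, yields Hausdorff convergence of the graphs.

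The main obstacle I anticipate is the second step: establishing the uniform two-sided confinement $\xi^\varepsilon(t)\le v^\varepsilon(x,t)\le1$ and the $|x|$-monotonicity of the profile, uniformly in $t\in[0,T]$ and in $\varepsilon$, since these are exactly what guarantee that the boundary layer fills the segment $[\xi(t),1]$ and nothing more. The $\delta$-sink at the origin makes $V\equiv1$ fail to be a global supersolution, so the comparison must be run on $\{y>0\}$ with the induced flux condition; handling the time dependence of $\xi^\varepsilon(t)$ in the lower barrier and upgrading pointwise-in-$t$ control to the locally uniform control demanded by the Hausdorff distance is where the care is needed.
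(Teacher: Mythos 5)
Your treatment of the first two assertions coincides with the paper's: Theorem~\ref{Tmain} is presented there as a direct consequence of Lemma~\ref{LCon} (applied after the rescaling $y=x/\varepsilon$, exactly as you do) and Lemma~\ref{LLE}, and the paper gives no further proof. In particular, the paper contains no proof at all of the graph-convergence assertion, so your third step is new work rather than an alternative to an argument in the paper; its skeleton --- uniform convergence to $1$ on $\{\rho\le|x|\le L\}\times[0,T]$ from Lemma~\ref{LCon}, the stretched picture on $|x|\le R\varepsilon$ filling the vertical segment via the intermediate value theorem, and a confinement estimate to control the intermediate scales $R\varepsilon<|x|<\rho$ (which neither convergence statement of Lemma~\ref{LCon} reaches) --- is the right one, and you correctly identify the confinement as the crux. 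You also silently corrected a typo: the two cases in the displayed formula for $\Xi$ are interchanged in the statement, and your reading (singleton $\{1\}$ away from the origin, segment at $x=0$) is the intended one.

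Two points in your confinement step need repair. (i) You assert that $v^\varepsilon(\cdot,t)$ itself is even, monotone in $|x|$, and confined between $\xi^\varepsilon(t)$ and $1$; under the hypothesis this cannot be literally true, since $v_0^\varepsilon$ is only uniformly close to the even profile. This is fixable by linearity: if $\tilde v^\varepsilon$ denotes the solution with data exactly $1-ce^{-a|x|/\varepsilon}$, the comparison principle (the same one driving the proof of Lemma~\ref{LCon}) gives $\sup_{x,t}|v^\varepsilon-\tilde v^\varepsilon|\le\sup_x\bigl|v_0^\varepsilon(x)-\bigl(1-ce^{-a|x|/\varepsilon}\bigr)\bigr|\to0$, so one may run the whole argument for $\tilde v^\varepsilon$ and absorb the error into the Hausdorff distance. (ii) More seriously, the confinement forces a restriction on $c$ that your phrase ``for the relevant sign of $c$'' does not resolve. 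On $\{y>0\}$ the derivative $\phi=V_y$ solves the same parabolic equation with boundary value $\phi(0,t)=bV(0,t)=b\xi(t)$ and initial value $ac\,e^{-ay}$, so the maximum principle yields $\phi\ge0$ (hence $\xi(t)\le V\le1$) exactly when $c\ge0$ and $\xi\ge0$ on $[0,T]$. By the explicit solution \eqref{EEXPS} with $\mu=a$, $\xi(t)=1-c+\bigl((a+b)c-b\bigr)\int_0^te^{-a^2s}q^b(s)\,ds$ is monotone between $1-c$ and $a/(a+b)$, so both sign conditions hold precisely when $0\le c\le1$. For $c>1$ one has $\xi(t)<0$ for small $t$, and the transmission condition $V_y(0^+,t)=b\xi(t)<0$ makes the profile dip strictly below $\xi(t)$ near the origin; for $c<0$ it overshoots above $\max(\xi(t),1)$. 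In those regimes the graphs accumulate on a strictly larger vertical segment than the one in the statement, so your argument cannot be completed as written: you should either restrict to $0\le c\le1$ (evidently the intended regime, under which your plan goes through), or replace the segment at $x=0$ by the closed convex hull of the range of $V(\cdot,t)$.
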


We also handle initial data not necessarily well-prepared.
 In this case, the equation \eqref{ELE} is altered because there are a few lower-order terms.
 If $V(y,0)=1-ce^{-\mu|x|}$ with $\mu\neq a$, we still get an explicit form corresponding to \eqref{ELE}.
 In these cases, the solution $\xi$ is explicitly represented by using the error function.
 We shall discuss these extensions for non-well-prepared data and the proof of Lemma \ref{LLE} in Section \ref{S2}.
 We also calculate numerically how the solution of the $\varepsilon$-problem converges by comparing it with the explicit solution of \eqref{ELE} and more general equations \eqref{EGFD}. 

We now come back to the singular limit problem of the Kobayashi--Warren--Carter system \eqref{EG1}--\eqref{EG2} with $\tau=\tau_1/\varepsilon$, $F(v)=a^2(v-1)^2$, $\alpha_0(v)=v^2$ as $\varepsilon\to0$.
 The equation \eqref{EG2} is of the total variation flow type, and its well-posedness for given $v$ is known when $v$ is independent of time and $\inf\alpha_w(v)>0$;
 in this case, \eqref{EG2} is the gradient flow of the weighted total variation $\int\alpha_0(v)|\nabla u|$ under $\alpha_w(v)$-weighted $L^2$ inner product if we impose the natural boundary condition like Neumann boundary condition.
 As in \cite{GGK}, we consider \eqref{EG2} for piecewise constant functions.
 For an interval $I=(p,q)$ and its given division $p=p_0<p_1<\cdots<p_m=q$, we consider a piecewise constant function of form
\[
	u(x,t) = h_j(t), \quad p_{j-1} < x \leq p_j \quad 
	(j = 1, \ldots, m).
\]
We interpret a solution by mimicking the notion of a solution when $v$ is independent of time and $\inf\alpha_w(v)>0$ (under the periodic boundary condition for simplicity).
 It is of the form
\begin{equation} \label{ECH}
\left \{
\begin{array}{l}
	\alpha_w(v)\partial_t u = \operatorname{div}(v^2 z),\quad |z| \leq 1, \\
	z(p_j) = \chi_j, \quad j = 0, 1, \ldots, m-1,
\end{array}
\right.
\end{equation}
where $\chi_i=\operatorname{sgn}(h_{j+1}-h_j)$;
 we identify $p_m=p_0$ by periodicity.
 See Figure \ref{Fpi}.
\begin{figure}[htb]
\centering
\includegraphics[width=6.5cm]{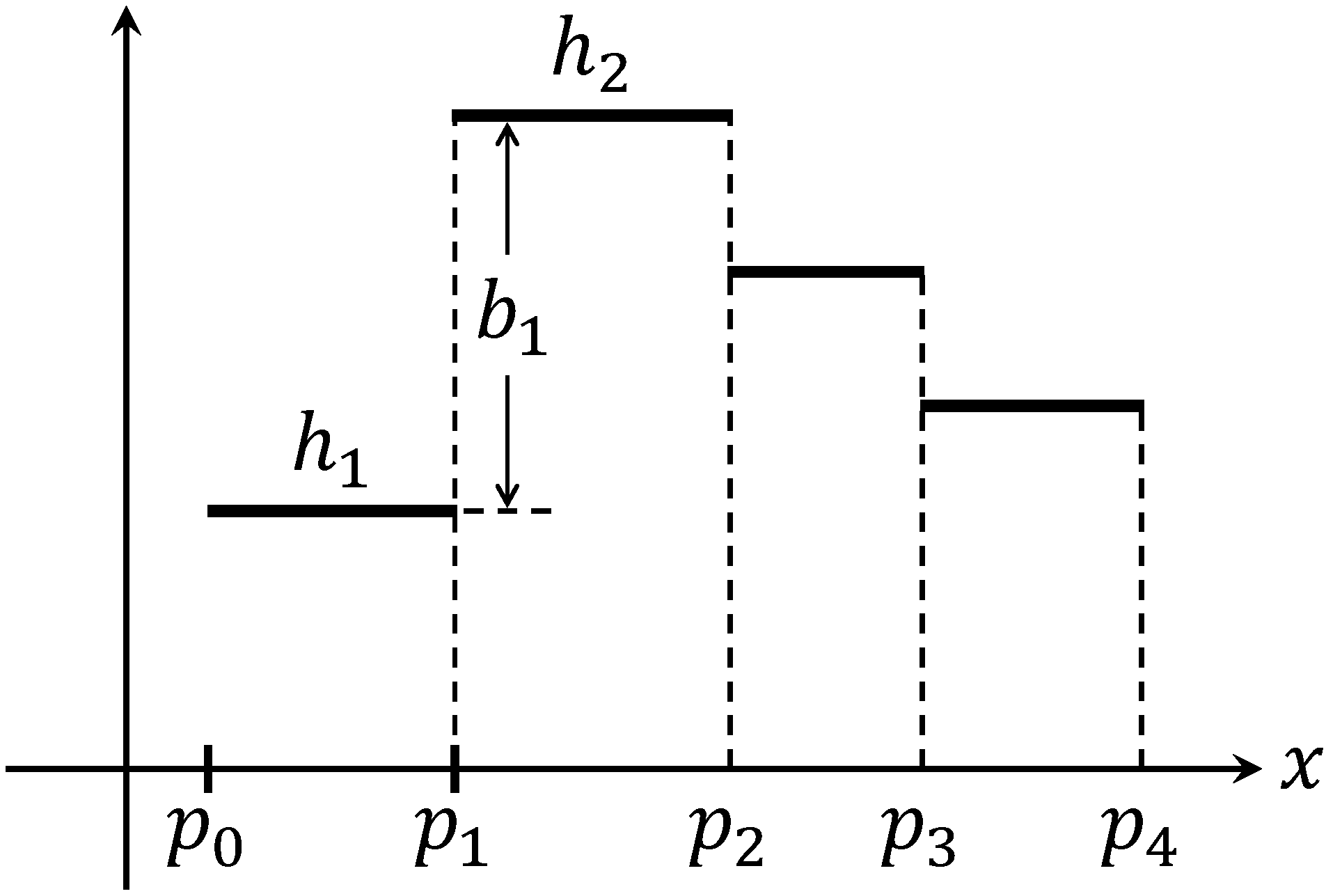}
\caption{graph of $u$ \\ $\chi_1=1$, $\chi_2=-1$} \label{Fpi}
\end{figure}
The function $z$ is called a Cahn-Hoffman vector field.

For a given $v$, it is unclear whether the solution stays in a class of piecewise constant functions \cite{GGK}, known as a facet-splitting problem.
 If $v$ is constant, it is well known that the solution stays in a class of piecewise constant functions.

We postulate that our system \eqref{ECH} has a (spatially) piecewise constant solution.
 Integrating the first equation of \eqref{ECH} in $(p_{j-1},p_j)$ yields
\[
	\int_{p_{j-1}}^{p_j} \alpha_w \left(v(x,t)\right)\, dx \frac{d}{dt} h_j(t)
	= v^2(p_j, t) \chi_j - v^2(p_{j-1},t) \chi_{j-1}.
\]
If $v_\varepsilon\to\Xi$ in the graph topology as $\varepsilon\to0$, we arrive at
\begin{equation} \label{ELT}
	\alpha_w(1) \frac{d}{dt} h_j
	= \frac{\xi_j^2 \chi_j - \xi_{j-1}^2 \chi_{j-1}}{p_j - p_{j-1}}.
\end{equation}
Since jump $b_j=|h_{j+1}-h_j|$ determines the $\xi_j$-equation as indicated in Theorem \ref{Tmain}, the equation for $\xi_j$ (assuming $\tau_1=1$) is expected to be
\begin{equation} \label{ELJ}
	M_a \partial_t \xi_j = -\operatorname{grad} E_\mathrm{sMM}^{0,b_j}(\xi_j)
	\left( = -2 \left( \left(|h_{j+1}-h_j| + a \right)\xi_j - a\right) \right),
\end{equation}
where $M_a f=\int_0^t m_a(t-s)f(s)\,ds$ for well-prepared initial data.
 Thus, the singular limit equation of the Kobayashi--Warren--Carter equation \eqref{EG1}--\eqref{EG2} as $\varepsilon\to0$ (under the periodic boundary condition) is expected to be the system  \eqref{ELT}--\eqref{ELJ} for $h_j$ and $\xi_j$.
 If we consider other boundary conditions, we always impose the homogeneous Neumann boundary condition for $v$, like \eqref{ENe}.
 Considering the Dirichlet boundary condition for $u$, $h_1$ and $h_m$ are fixed time-independent constants.
 We can impose the Neumann condition for $u$; in this case, imposing $z(p_0)=z(p_m)=0$ in \eqref{ECH} is natural.
 It is rather standard \cite{Po} to construct a unique local-in-time solution for a system of a fractional differential equation and an ordinary differential equation.

We note that the solvability of the initial value problem for the original Kobayashi--Warren--Carter system \eqref{EKWC1}--\eqref{EKWC2} is still an open problem, even in a one-dimensional setting.
 If we write it in the form of \eqref{EG1}--\eqref{EG2}, the difficulty stems from the fact that the weights $\alpha_w$ and $\alpha_0$ can vanish somewhere.
 In the literature, $\alpha_w$ is assumed to be away from zero.
 If $\alpha_0$ is allowed to vanish, $\Delta u$ term is added in the right-hand side of \eqref{EKWC2}.
 For example, instead of considering \eqref{EKWC2}, we consider
\begin{equation} \label{EMOKWC}
	\tau_0(v^2+\delta)u_t = s\operatorname{div} \left( (v^2+\delta')\nabla u / |\nabla u| + \mu\nabla u \right)
\end{equation}
with $\delta>0$, $\delta'\geq0$ and $\mu\geq0$ such that $\delta'+\mu>0$.
 The existence of a solution to \eqref{EKWC1} and \eqref{EMOKWC} with its large time behaviour is established in \cite{IKY, MoSh, MoShW1, SWat, SWY, WSh} under several homogeneous boundary conditions. 
 Unfortunately, the uniqueness of their solution is only known in a one-dimensional setting under the relaxation
term $\mu>0$ \cite[Theorem 2.2]{IKY}.
 The extension of these results to inhomogeneous boundary condition is not difficult.
 In \cite{MoShW2}, under non-homogeneous Dirichlet boundary conditions, structured patterns of stationary (i.e., time-independent) solutions were studied.
 In a one-dimensional setting, they thoroughly characterised all stationary solutions. 
 In this paper, we do not discuss convergence problems as $\varepsilon\to0$.

This paper is organised as follows.
 In Section \ref{S2}, we derive the limit equation both for well-prepared and non-well-prepared initial data.
 Most of the calculations there are very explicit. 
 In Section \ref{S3}, we prove Lemma \ref{LCon}.
 Section \ref{S4} gives a rigorous definition of the Dirichlet problem for \eqref{EG2}, assuming that $v$ is given and $\alpha_w(v)\equiv1$.
 In Section \ref{S5}, we give several numerical tests.

\section{Derivation of equations with fractional time derivative} \label{S2} 

We begin with recalling several elemental properties of the Laplace transform
\[
	\mathcal{L}[g](\lambda) := \int_0^\infty e^{-\lambda s} g(s)\, ds, \quad \lambda > 0
\]
for a locally integrable function $g$ in $[0,\infty)$.
 By definition,
\begin{equation} \label{ELS}
	\mathcal{L} \left[ e^{-\mu s} g(s) \right](\lambda)
	= \mathcal{L}[g](\lambda + \mu),
\end{equation}
and by definition of the Gamma function, we see
\[
	\mathcal{L} [ s^{\beta-1} ](\lambda)
	= \Gamma(\beta) \lambda^{-\beta}.
\]
We now arrive at a well-known formula
\begin{equation} \label{EL1}
	\mathcal{L} [ f_\beta^\alpha ](\lambda)
	= \mathcal{L} [ e^{-\alpha s} f_\beta^0 ]
	= (\lambda + \alpha)^{-\beta}
\end{equation}
for $\alpha,\beta>0$.
\begin{lemma} \label{LLap}
For $a>0$, let $m_a(t)$ be
\[
	m_a(t) = 2 \left\{ f_{1/2}^{a^2} (t) + a^2 \int_0^t f_{1/2}^{a^2} (s)\, ds - a \right\}.
\]
Then
\[
	\mathcal{L}[m_a](\lambda) = 2 \left(\frac{\sqrt{\lambda+a^2}-a}{\lambda}\right).
\]
Moreover, $m_a'(t)<0$ and $m_a(t)>0$ for $t>0$ with $\lim_{t\to\infty}m_a(t)=0$.
\end{lemma}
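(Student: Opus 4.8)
The plan is to establish the two assertions separately: first the closed form of $\mathcal{L}[m_a]$ by a direct computation from the definition, and then the qualitative properties ($m_a'<0$, $m_a>0$, and the decay at infinity) by producing an explicit representation of $m_a$ in terms of the error function. Both parts are essentially computational, so the emphasis is on organizing the calculation so that the right cancellation becomes visible.

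For the Laplace transform, I would apply \eqref{EL1} with $\beta=1/2$ and $\alpha=a^2$ to get $\mathcal{L}[f_{1/2}^{a^2}](\lambda)=(\lambda+a^2)^{-1/2}$. Combining this with the standard rule $\mathcal{L}\bigl[\int_0^{\cdot}g(s)\,ds\bigr](\lambda)=\lambda^{-1}\mathcal{L}[g](\lambda)$ for the integral term and $\mathcal{L}[1](\lambda)=\lambda^{-1}$ for the constant term, the three contributions are $(\lambda+a^2)^{-1/2}$, $\,a^2\lambda^{-1}(\lambda+a^2)^{-1/2}$, and $-a\lambda^{-1}$. Factoring $(\lambda+a^2)^{-1/2}$ out of the first two and using $1+a^2/\lambda=(\lambda+a^2)/\lambda$ collapses them into $\lambda^{-1}(\lambda+a^2)^{1/2}$; subtracting $a\lambda^{-1}$ and multiplying by $2$ then gives exactly $2\bigl(\sqrt{\lambda+a^2}-a\bigr)/\lambda$. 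This step is entirely routine.

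For the qualitative statements the key is to make $m_a$ explicit. Since $\Gamma(1/2)=\sqrt{\pi}$, one has $f_{1/2}^{a^2}(t)=e^{-a^2t}/\sqrt{\pi t}$, and the substitution $s\mapsto a\sqrt{s}$ evaluates the integral term as $a^2\int_0^t f_{1/2}^{a^2}(s)\,ds=a\,\operatorname{erf}(a\sqrt{t})$. Hence, with $\operatorname{erfc}=1-\operatorname{erf}$,
\[
	m_a(t)=2\left(\frac{e^{-a^2t}}{\sqrt{\pi t}}-a\,\operatorname{erfc}(a\sqrt{t})\right).
\]
Differentiating, I would use $\frac{d}{dt}\operatorname{erfc}(a\sqrt{t})=-\frac{a}{\sqrt{\pi}}\,e^{-a^2t}t^{-1/2}$, while the derivative of the first term is $-\frac{1}{\sqrt{\pi}}e^{-a^2t}t^{-3/2}\bigl(a^2t+\tfrac12\bigr)$. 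The pleasant point is that the $a^2t$-contributions cancel, leaving the clean identity
\[
	m_a'(t)=-\frac{e^{-a^2t}}{\sqrt{\pi}\,t^{3/2}}<0\qquad(t>0).
\]

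Finally, $\lim_{t\to\infty}m_a(t)=0$ follows at once from the explicit form, since both $e^{-a^2t}/\sqrt{\pi t}$ and $\operatorname{erfc}(a\sqrt{t})$ tend to $0$; alternatively one may invoke the final value theorem, noting $\lambda\,\mathcal{L}[m_a](\lambda)=2(\sqrt{\lambda+a^2}-a)\to0$ as $\lambda\downarrow0$. Positivity $m_a(t)>0$ is then immediate, since $m_a$ is strictly decreasing and vanishes at infinity (equivalently, the sharp bound $\operatorname{erfc}(x)<e^{-x^2}/(\sqrt{\pi}\,x)$ with $x=a\sqrt{t}$ gives $a\,\operatorname{erfc}(a\sqrt{t})<e^{-a^2t}/\sqrt{\pi t}$ directly). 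I expect no genuine obstacle here; the only point requiring care is the algebraic cancellation in $m_a'$, and it is precisely that cancellation which renders the monotonicity, and hence the positivity, transparent.
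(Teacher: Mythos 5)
Your proposal is correct and takes essentially the same route as the paper: the Laplace transform computation is identical, and your derivative calculation is the paper's cancellation $\frac12 m_a'(t)=-\frac{1}{2t}f_{1/2}^{a^2}(t)$ written in explicit error-function form (your formula for $m_a$ is precisely the identity \eqref{EMERF} that the paper derives later in Section \ref{S2}). The final step — positivity from strict monotone decrease plus $\lim_{t\to\infty}m_a(t)=0$ — is also exactly the paper's argument.
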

\begin{proof}
\quad
We recall that the Laplace transform of the convolution $(g_1*g_2)(t)=\int_0^t g_1(t-s)g_2(s)\,ds$ is given by
\begin{equation} \label{ECon}
	\mathcal{L} [g_1 * g_2]	= \mathcal{L} [g_1] \mathcal{L} [g_2].
\end{equation}
If we take $g_1\equiv1$, we see
\begin{equation} \label{EInt}
	\mathcal{L} \left[\int_0^t g_2(s)\,ds\right](\lambda)
	= \mathcal{L} [g_2] (\lambda) \lambda^{-1}
\end{equation}
since $\mathcal{L} [g_1](\lambda)=\lambda^{-1}$ by \eqref{EL1}.
 Setting $\alpha=a^2$, we apply \eqref{EL1} and \eqref{EInt} to get
\begin{align*}
	\mathcal{L} \left[\frac{m_a}{2}\right]
	&= (\lambda+\alpha)^{-1/2} + \alpha(\lambda+\alpha)^{-1/2} \lambda^{-1} - a\lambda^{-1} \\
	&= \left((\lambda+\alpha)^{1/2} - a\right)\lambda^{-1},
\end{align*}
which yields the desired formula for $\mathcal{L}[m_a]$.

We differentiate $m_a/2$ to get
\begin{align*}
	\frac12 m_a'(t)
	&= (-\alpha) f_{1/2}^\alpha(t) - \frac{1}{2t} f_{1/2}^\alpha(t) + \alpha f_{1/2}^\alpha(t) \\
	&= -\frac{1}{2t} f_{1/2}^\alpha(t)<0 \quad\text{for}\quad t>0.
\end{align*}
We observe that
\[
	\lim_{t\to\infty} \int_0^t f_{1/2}^\alpha(s)\, ds
	= \frac{1}{\Gamma\left(1/2\right)}\int_0^\infty e^{-\alpha s} s^{\frac12-1}\, ds = \alpha^{-1/2}.
\]
Thus, $\lim_{t\to\infty}m_a(t)=0$ since $\lim_{t\to\infty}f_{1/2}^\alpha(t)=0$, which this implies that $m_a(t)>0$ for $t>0$ since $m_a'(t)<0$.
\end{proof}
\begin{proof}[Proof of Lemma \ref{LLE}]
\quad
Since the property of $m_a$ has been proved in Lemma \ref{LLap}, it suffices to derive \eqref{ELE}.
 Studying the equation for $w=V-1$ instead of \eqref{ERR} is more convenient.
 The equation \eqref{ERR} for $w$ (with $\tau_1=1$) becomes
\begin{equation} \label{Ew}
	w_t - w_{xx} + a^2 w + 2b(w+1)\delta = 0, \quad
	x \in \mathbf{R}, \quad t > 0.
\end{equation}
The initial data $w(x,0)$ equals
\[
	w(x,0) = w_0(x)
\]
with $w_0(x)=-ce^{-a|x|}$.
 Let $\widehat{w}(x,\lambda)$ be the Laplace transform of $w$ in the $t$ variable, i.e.,
\[
	\widehat{w}(x,\lambda) = \mathcal{L}[w](x,\lambda)
	= \int_0^\infty e^{-\lambda t} w(x,t)\, dt.
\]
We note that
\[
	\widehat{w_t} = \lambda \widehat{w} - w_0, \quad
	\left. w \right|_{t=0} = w_0.
\]
Taking the Laplace transform of \eqref{Ew}, we arrive at
\begin{equation} \label{ELw}
	\lambda\widehat{w} - w_0 - \widehat{w}_{xx} + a^2\widehat{w} + 2b\left(\widehat{w}+\lambda^{-1}\right)\delta = 0,
\end{equation}
a second-order linear ordinary differential equation with a jump of the derivatives $\widehat{w}_x$.
 Since the coefficients are constants, we can solve \eqref{ELw} explicitly.

A bounded solution satisfying
\[
	\lambda\widehat{w} - w_0 - \widehat{w}_{xx} + a^2\widehat{w} = 0
	\quad\text{for}\quad x > 0 \quad\text{and}\quad x < 0
\]
is of form
\[
	\widehat{w} = Ae^{-\sqrt{\lambda+a^2}|x|} - \frac{c}{\lambda} e^{-a|x|}
\]
with $A\in\mathbf{R}$.
 One should determine $A$ so that \eqref{ELw} holds.
 We observe 
\[
	\widehat{w}_x = \left( -A\sqrt{\lambda+a^2} e^{-\sqrt{\lambda+a^2}|x|} + \frac{ca}{\lambda} e^{-a|x|} \right) (\operatorname{sgn}x)
\]
so that
\[
	\widehat{w}_{xx} = \left( -2A\sqrt{\lambda+a^2} + \frac{2ca}{\lambda} \right)\delta + A(\lambda+a^2) e^{-\sqrt{\lambda+a^2}|x|} - \frac{ca^2}{\lambda} e^{-a|x|}
\]
since $(\operatorname{sgn}x)'=2\delta$.
 The equation \eqref{ELw} imposes that
\[
	\text{the}\ \delta\ \text{part of}\ \widehat{w}_{xx} = 2b(\widehat{w} + \lambda^{-1})\delta,
\]
which implies
\[
	-2A\sqrt{\lambda+a^2} + \frac{2ca}{\lambda} = 2b\left(\widehat{w}(0,\lambda) + \lambda^{-1} \right).
\]
Since $\widehat{w}(0,\lambda)=A-c\lambda^{-1}$, we end up with
\[
	\left( \sqrt{\lambda+a^2}+b \right)A = - \frac{b(1-c)}{\lambda} + \frac{ca}{\lambda}.
\]
We set $\eta(t)=w(0,t)$ and observe that
\[
	\widehat{\eta} = A-\frac{c}{\lambda} = \frac{1}{\sqrt{\lambda+a^2}+b} \frac{1}{\lambda}(ca+bc-b) - \frac{c}{\lambda}.
\]

We next extract the time derivative of $\eta$.
 We note that
\[
	\widehat{\eta_t} = \lambda\widehat{\eta} - \eta(0)
\]
so that $\widehat{\eta_t}=\lambda\widehat{\eta}+c$ since $\eta(0)=-c$.
 Then
\[
	\widehat{\eta_t} = \frac{1}{\sqrt{\lambda+a^2}+b}(ca+bc-b).
\]
Multiplying $\sqrt{\lambda+a^2}+b$ and subtracting $a\widehat{\eta_t}$ from both sides, we get
\begin{align*}
	\left( \sqrt{\lambda+a^2}-a \right)\widehat{\eta_t}
	&= -(b+a) \widehat{\eta_t} + (ca+bc-b) \\
	&= -(b+a) \lambda\widehat{\eta} - (b+a)c + (ca+bc-b) \\
	&= \lambda\left( -(b+a)\widehat{\eta}-\frac{b}{\lambda} \right).
\end{align*}
The energy for $\xi$ equals
\[
	E_\mathrm{sMM}^{0,b}(\xi) = b\xi^2 + a(\xi-1)^2.
\]
Since $\eta=\xi-1$, we see $\operatorname{grad}E_\mathrm{sMM}^{0,b}(\xi)=2(b+a)\eta+2b$.
 Thus
\begin{equation} \label{EEN}
	2\left( \frac{\sqrt{\lambda+a^2}-a}{\lambda}\right)\widehat{\eta_t} = -2(b+a)\widehat{\eta} - 2b\widehat{1}.
\end{equation}
By Lemma \ref{LLap} and \eqref{ECon}, we now conclude that $\xi=\eta+1$ solves \eqref{ELE} by taking the inverse Laplace transform of \eqref{EEN}.
\end{proof}
In Lemma \ref{LLE}, we only considered well-prepared initial data, meaning that $V(y,t)$ is a stationary solution of \eqref{ERR} in $\mathbf{R}\backslash\{0\}$.
 We take this opportunity to write a general equation corresponding to \eqref{ELE} starting from general initial data.
 We set
\[
	(M_a f)(t) = m_a * f(t) = \int_0^t m_a(t-s) f(s)\, ds,
\]
and the equation \eqref{ELE} is
\begin{equation} \label{ELE2}
	M_a \xi_s = -\operatorname{grad}E_\mathrm{sMM}^{0,b} (\xi).
\end{equation}
For general initial data, our equation corresponding to \eqref{ELE2} becomes more complicated than \eqref{ELE2}.

We set
\[
	G_\lambda^a(y) = \frac{e^{-\sqrt{\lambda+a^2}|y|}}{2\sqrt{\lambda+a^2}},
\]
which is the Green function of $-\partial_y^2+\sigma$ with $\sigma=\lambda+a^2$, i.e.,
\[
	-\partial_y^2 G_\lambda^a + (\lambda+a^2) G_\lambda^a = \delta.
\]
For $w_0\in L^\infty(\mathbf{R})$, we set
\[
	g^a(\lambda,w_0) = (G_\lambda^a *_x w_0)(0),
\]
where $*_x$ denotes the convolution in space, i.e.,
\[
	(G_\lambda^a *_x w_0)(y) = \int_{-\infty}^\infty G_\lambda^a(y-z) w_0(z)\,dz.
\]
\begin{lemma} \label{LGeq}
Let $\tau_1=1$.
 Let $V$ be the bounded solution of \eqref{ERR} in $\mathbf{R}\times(0,\infty)$ with initial data $V(y,0)=1+w_0$, where $w_0$ is bounded and Lipschitz continuous in $\mathbf{R}$.
 Then $\xi(t)=V(0,t)$ solves
\begin{equation} \label{ELEG}
	M_a \xi_t + m_a\left(\xi(0)-1\right) - \mathcal{L}^{-1} \left[2\sqrt{\lambda+a^2} g^a(\lambda,w_0)\right]
	= - \operatorname{grad} E_\mathrm{sMM}^{0,b}(\xi).
\end{equation}
\end{lemma}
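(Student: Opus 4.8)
The plan is to follow the same Laplace-transform strategy as in the proof of Lemma \ref{LLE}, but now keeping the general contribution coming from $w_0$ instead of exploiting the special exponential profile. First I would set $w=V-1$, so that \eqref{ERR} with $\tau_1=1$ becomes $w_t-w_{xx}+a^2w+2b(w+1)\delta=0$ with initial datum $w(\cdot,0)=w_0$, and I would write $\eta(t)=w(0,t)=\xi(t)-1$, $\eta_0=\eta(0)=w_0(0)=\xi(0)-1$. Taking the Laplace transform in $t$ and using $\widehat{w_t}=\lambda\widehat{w}-w_0$ yields, exactly as in \eqref{ELw}, the spatial ODE
\[
	-\widehat{w}_{xx}+(\lambda+a^2)\widehat{w}=w_0-2b\bigl(\widehat{\eta}+\lambda^{-1}\bigr)\delta,
\]
where I have used that the factor multiplying $\delta$ only feels the value $\widehat{w}(0,\lambda)=\widehat{\eta}$.

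The key difference from Lemma \ref{LLE} is that the regular source $w_0$ is no longer resonant, so I would solve the ODE through the Green function $G_\lambda^a$ of $-\partial_y^2+(\lambda+a^2)$ rather than by an explicit exponential ansatz. Since $G_\lambda^a$ decays exponentially and $w_0$ is bounded, the convolution $G_\lambda^a*_x w_0$ is a well-defined bounded particular solution, and the unique bounded solution of the full transformed equation is
\[
	\widehat{w}(x,\lambda)=(G_\lambda^a*_x w_0)(x)-2b\bigl(\widehat{\eta}+\lambda^{-1}\bigr)G_\lambda^a(x).
\]
Evaluating at $x=0$, using $G_\lambda^a(0)=1/(2\sqrt{\lambda+a^2})$ and $g^a(\lambda,w_0)=(G_\lambda^a*_x w_0)(0)$, gives a closed scalar relation which, after multiplying by $\sqrt{\lambda+a^2}$, reads
\[
	\bigl(\sqrt{\lambda+a^2}+b\bigr)\widehat{\eta}=\sqrt{\lambda+a^2}\,g^a(\lambda,w_0)-b\lambda^{-1}.
\]
As a consistency check, for $w_0=-ce^{-a|x|}$ one computes $\sqrt{\lambda+a^2}\,g^a=-c(\sqrt{\lambda+a^2}-a)/\lambda$, which reproduces the relation obtained in Lemma \ref{LLE}.

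It then remains to massage this relation into the Laplace transform of \eqref{ELEG}. Using $\widehat{\eta_t}=\lambda\widehat{\eta}-\eta_0$, Lemma \ref{LLap} (namely $\mathcal{L}[m_a]=2(\sqrt{\lambda+a^2}-a)/\lambda$), and the identity $\operatorname{grad}E_\mathrm{sMM}^{0,b}(\xi)=2(a+b)\eta+2b$ already recorded in the proof of Lemma \ref{LLE}, I would observe that $\mathcal{L}[m_a]\,\widehat{\eta_t}+\mathcal{L}[m_a]\,\eta_0=2(\sqrt{\lambda+a^2}-a)\widehat{\eta}$, so the $\eta_0$ contributions cancel and the transformed left-hand side of \eqref{ELEG} collapses to $2(\sqrt{\lambda+a^2}-a)\widehat{\eta}-2\sqrt{\lambda+a^2}\,g^a(\lambda,w_0)$; this equals $-2(a+b)\widehat{\eta}-2b\lambda^{-1}$ precisely when the scalar relation above holds. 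Inverting the Laplace transform, and recognising $\mathcal{L}[m_a]\,\widehat{\eta_t}$ and $\mathcal{L}[m_a]\,\eta_0$ as the transforms of $M_a\xi_t$ and $m_a(\xi(0)-1)$ by the convolution rule \eqref{ECon}, yields \eqref{ELEG}. The main obstacle is not algebraic but analytic: one must justify that the bounded solution of the transformed problem is genuinely given by the Green-function formula (uniqueness among bounded solutions, and the correct distributional matching of the $\delta$-source with the jump of $\widehat{w}_x$ across $x=0$), and that the Laplace transform may be interchanged with $\partial_y^2$. Here boundedness of $w_0$ is what makes $g^a(\lambda,w_0)$ finite via the exponential decay of $G_\lambda^a$, while the Lipschitz hypothesis is needed to guarantee enough regularity of $w$ in $t$ for $\eta(t)=w(0,t)$ to be differentiable and for the inversion to produce a genuine pointwise identity rather than merely a distributional one.
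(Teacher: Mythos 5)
Your proposal is correct and takes essentially the same route as the paper: Laplace transform in time, solution of the transformed ODE by means of the Green function $G_\lambda^a$, evaluation at the origin to obtain the scalar relation $\left(\sqrt{\lambda+a^2}+b\right)\widehat{\eta}=\sqrt{\lambda+a^2}\,g^a(\lambda,w_0)-b/\lambda$, and rearrangement via $\mathcal{L}[m_a]$ before inverting, with the Lipschitz hypothesis invoked exactly as in the paper (its Lemma \ref{Lemma3.1}) to justify transforming $w_t$. The only differences are cosmetic: the paper writes the bounded solution as $Ae^{-\sqrt{\lambda+a^2}|x|}$ plus the particular solution and fixes $A$ by matching the $\delta$-singularity, whereas you fold the $\delta$-source into the Green-function representation with a self-consistent coefficient and then verify equivalence with \eqref{ELEG} rather than deriving it forward.
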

\begin{proof}
\quad
If $w_0$ is Lipschitz, then we easily see that $|w_t|t^{1/2}$ is bounded (near $t=0$) so that the integrand of
\begin{equation*}
	\mathcal{L}[w_t](\lambda)=\int_0^\infty e^{-\lambda s}w_s(x,s)\,ds
\end{equation*}
is integrable for all $x\in\mathbf{R}$ near $s=0$; see Lemma~\ref{Lemma3.1}.

We argue in the same way to prove Lemma \ref{LLE}.
 We begin with \eqref{ELw}, where $w=V-1$.
 Its solution outside $x=0$ is given as
\[
	\widehat{w} = Ae^{-\sqrt{\lambda+a^2}|x|} + g^a(\lambda,w_0)
\]
with $A\in\mathbf{R}$.
 As before, we determine $A$ and obtain that $\eta=\xi-1$ satisfies
\begin{equation} \label{ELSOL}
	\widehat{\eta}(\lambda) = \frac{-b}{\sqrt{\lambda+a^2}+b} \left( g^a(\lambda,w_0) + \frac1\lambda \right)
	+ g^a(\lambda,w_0).
\end{equation}

Since $\widehat{\eta_t}=\lambda\widehat{\eta}-\eta(0)$, we proceed with
\begin{align*}
	&\left(\sqrt{\lambda+a^2}-a\right)\widehat{\eta_t} \\
	&= -(b+a)\widehat{\eta_t} -\lambda b\left(g^a(\lambda,w_0)+\frac1\lambda \right)
	+ \left(\sqrt{\lambda+a^2}+b\right) \left(\lambda g^a(\lambda,w_0)-\eta(0) \right) \\
	&= -(b+a)\lambda\widehat{\eta} + (b+a)\eta(0) -\lambda b\left(g^a(\lambda,w_0)+\frac1\lambda \right)\\
	&\hspace{150pt}+ \left(\sqrt{\lambda+a^2}+b\right) \left(\lambda g^a(\lambda,w_0)-\eta(0) \right) \\
	&= -(b+a)\lambda\widehat{\eta} -\lambda\frac{b}{\lambda}  + \left(a-\sqrt{\lambda+a^2}\right) \eta(0) +\sqrt{\lambda+a^2} \lambda g^a(\lambda,w_0).
\end{align*}
Thus
\begin{multline*}
	\frac{2\left(\sqrt{\lambda+a^2}-a\right)}{\lambda} \widehat{\eta_t}
	= -2(b+a)\widehat{\eta} \\
	- 2\frac{b}{\lambda} - \frac{2\left(\sqrt{\lambda+a^2}-a\right)}{\lambda}\eta(0)
	+ 2\sqrt{\lambda+a^2} g^a(\lambda,w_0).
\end{multline*}
Taking the inverse Laplace transform, we obtain
\[
	M_a \eta_t = -\operatorname{grad}E_\mathrm{sMM}^{0,b}(\eta+1) - m_a(t)\eta(0)
	+ \mathcal{L}^{-1} \left(2\sqrt{\lambda+a^2} g^a(\lambda,w_0) \right),
\]
the same as \eqref{ELEG}.
\end{proof}
The term $\mathcal{L}^{-1} \left[2\sqrt{\lambda+a^2} g^a\right]$ has a more explicit form.
 Let $E(x,t)$ be the Gauss kernel, i.e.,
\[
	E(x,t) = \frac{1}{\sqrt{4\pi t}} e^{-|x|^2/4t}.
\]
We know its Laplace transform (as a function of $t$) is
\[
	\mathcal{L}[E](\lambda) = \frac{1}{2\lambda^{1/2}} e^{-|x|\lambda^{1/2}}.
\]
Suppose we set $E^a=e^{-a^2 t}E$, then
\[
	\mathcal{L}[E^a](\lambda) = \frac{1}{2\sqrt{\lambda+a^2}} e^{-\sqrt{\lambda+a^2}|x|} = G_\lambda^a(x).
\]
Since $\mathcal{L}\left[f_{1/2}^{a^2}\right]=(\lambda+a^2)^{-1/2}$ and $\mathcal{L}[g_1*g_2] = \mathcal{L}[g_1] \mathcal{L}[g_2]$, we end up with
\[
	\mathcal{L} \left[ f_{1/2}^{a^2} * E^a \right](\lambda)
	= \frac{1}{2(\lambda+a^2)} e^{-\sqrt{\lambda+a^2}|x|}.
\]
Thus
\[
	\mathcal{L} \left[ \partial_t \left(f_{1/2}^{a^2} * E^a \right) + a^2 \left(f_{1/2}^{a^2} * E^a \right) \right]
	= \frac12 e^{-\sqrt{\lambda+a^2}|x|}.
\]
Since $\sqrt{\lambda+a^2}g^a=\frac12\left(e^{-\sqrt{\lambda+a^2}|x|} *_x w_0 \right)(0,t)$, we conclude that
\[
	\mathcal{L}^{-1} \left[ 2 \sqrt{\lambda+a^2} g^a \right] = 2\partial_t \left( f_{1/2}^{a^2} * (E^a *_x w_0)(0) \right) + 2 f_{1/2}^{a^2} * (E^a *_x w_0)(0).
\]

If $w_0=-ce^{-\mu|x|}$, we observe that
\begin{equation} \label{ELGA}
	\mathcal{L}^{-1} \left[ 2 \sqrt{\lambda+a^2} g^a \right] = -2c \left( f_{1/2}^{a^2} - \mu e^{(\mu^2-a^2)t} \operatorname{erfc}\left(\mu\sqrt{t}\right) \right),
\end{equation}
where $\operatorname{erfc}$ denotes the error function, i.e.,
\[
	\operatorname{erfc}(s) = \frac{2}{\sqrt{\pi}} \int_s^\infty e^{-\tau^2}\, d\tau.
\]
Indeed, we proceed with
\begin{align}
\begin{aligned}\label{ELGa}
 	g^a(\lambda) &= G_\lambda^a *_x \left(-ce^{-\mu|x|}\right)(0) 
	= -c\frac{1}{2\sqrt{\lambda+a^2}}
	\int_{-\infty}^\infty e^{-\sqrt{\lambda+a^2}|0-y|} e^{-\mu|y|}\, dy \\
	&= -\frac{c}{\sqrt{\lambda+a^2}}
	\int_0^\infty e^{-\left(\mu+\sqrt{\lambda+a^2}\right)y} \,dy
	= - \frac{c}{\sqrt{\lambda+a^2}\left(\mu+\sqrt{\lambda+a^2}\right)}.
\end{aligned}
\end{align}
By a direct calculation, we observe that
\begin{align*}
 	&\mathcal{L} \left[ \operatorname{erfc}\left(\mu\sqrt{t}\right) \right]
	= \int_0^\infty e^{-\lambda t} \left(\frac{2}{\sqrt{\pi}} \int_{\mu\sqrt{t}}^\infty e^{-s^2}\, ds \right)dt \\
	&\quad= \frac{2}{\sqrt{\pi}} \int_0^\infty \left[ \int_0^{s^2/\mu^2} e^{-s^2} e^{-\lambda t}\,dt \right]ds
	= \frac{2}{\sqrt{\pi}} \int_0^\infty e^{-s^2} \left[ \int_0^{s^2/\mu^2} e^{-\lambda t}\,dt \right]ds \\
	&\quad= \frac{2}{\sqrt{\pi}} \int_0^\infty e^{-s^2} \left( \frac{1-e^{\lambda s^2/\mu^2}}{\lambda} \right)ds
	= \frac{2}{\sqrt{\pi}} \frac1\lambda \left( \frac{\sqrt{\pi}}{2} - \frac{\sqrt{\pi}}{2} \sqrt{\frac{\mu^2}{\mu^2+\lambda}} \right) \\
	&\quad= \frac1\lambda \left( 1 - \frac{\mu}{\sqrt{\lambda+\mu^2}} \right).
\end{align*}
Thus,
\begin{align}
\begin{aligned} \label{ELERR}
 	\mathcal{L} \left[ f_{1/2}^0 - \mu e^{\mu^2 t} \operatorname{erfc}\left(\mu\sqrt{t}\right) \right]
	&= \frac{1}{\sqrt{\lambda}} - \frac{\mu}{\lambda-\mu^2} \left( 1 - \frac{\mu}{\sqrt{\lambda}} \right)
	= \frac{\lambda-\mu^2-\sqrt{\lambda}\mu+\mu^2}{\sqrt{\lambda}(\lambda-\mu^2)} \\
	&= \frac{\sqrt{\lambda}-\mu}{\lambda-\mu^2} = \frac{1}{\sqrt{\lambda}+\mu},
\end{aligned}
\end{align}
where we invoked \eqref{EL1} and \eqref{ELS}.
 We set $q^\mu(t)=f_{1/2}^0(t)-\mu e^{\mu^2 t}\operatorname{erfc}\left(\mu\sqrt{t}\right)$ so that \eqref{ELERR} becomes
\begin{equation} \label{ELERR2}
	\mathcal{L}[q^\mu] = \frac{1}{\sqrt{\lambda}+\mu}.
\end{equation} 
Using \eqref{ELS} again, we now obtain \eqref{ELGA}.

The formula \eqref{ELERR} gives another representation of $m_a$.
 Indeed, since
\[
	\frac12 \mathcal{L}[m_a] (\lambda) = \frac{\sqrt{\lambda+a^2}-a}{\lambda}
	= \frac{1}{\sqrt{\lambda+a^2}+a},
\]
we see
\begin{equation} \label{EMERF}
	\frac12 m_a(t) = e^{-a^2t} q^a(t) = f_{1/2}^{a^2}(t) - a \operatorname{erfc}\left(a\sqrt{t}\right),
\end{equation}
in particular, implies that $m_a(t)\leq 2f_{1/2}^{a^2}(t)$ and
\[
	\operatorname{erfc}\left(a\sqrt{t}\right)
	= 1 - a\int_0^t f_{1/2}^{a^2}(s)\,ds. 
\]
Moreover, by \eqref{EMERF}, we see
\[
	q^a(t) = e^{a^2t} m_a(t)/2.
\]
 Therefore, the positivity of $m_a$ (Lemma \ref{LLap}) implies $q^a(t)>0$ for $t>0$.
 The formula \eqref{ELEG} in Lemma \ref{LGeq} becomes
\begin{equation} \label{EGFD}
	M_a \xi_t + 2cae^{-a^2t} \operatorname{erfc}\left(a\sqrt{t}\right)
	- 2c\mu e^{(\mu^2-a^2)t} \left(\operatorname{erfc}\left(\mu\sqrt{t}\right)\right)
	= -\operatorname{grad} E_\mathrm{sMM}^{0,b}(\xi) 
\end{equation}
if $w_0=-ce^{-\mu|x|}$.
 If $\mu=a$, this is reduced to \eqref{ELE} (or \eqref{ELE2}).

One can give an explicit form of a solution of \eqref{EGFD} starting from $\xi(0)=1-c$.
 We plug \eqref{ELGa} into \eqref{ELSOL} to get
\begin{align}
\begin{aligned}\label{ELSOLF}
 	\widehat{\eta}(\lambda) &= \frac{\sqrt{\lambda+a^2}}{\sqrt{\lambda+a^2}+b} g^a(\lambda) + \frac{-b}{\lambda\left(\sqrt{\lambda+a^2}+b\right)} \\
 	&= \frac{1}{\sqrt{\lambda+a^2}+b} \left(\frac{-c}{\mu+\sqrt{\lambda+a^2}} - \frac{b}{\lambda} \right).
\end{aligned}
\end{align}
Since
\[
	\mathcal{L}\left[e^{-a^2t}q^\mu\right] = \frac{1}{\sqrt{\lambda+a^2}+\mu}
\]
by \eqref{ELERR} and \eqref{ELS}, we have
\begin{align*} 
 	\eta &= -ce^{-a^2t} q^b * e^{-a^2t} q^\mu
 	- b \int_0^t e^{-a^2s} q^b (s)\, ds \\
 	&= -ce^{-a^2t} (q^b * q^\mu)
 	- b \int_0^t e^{-a^2s} q^b (s)\, ds.
\end{align*}
However, the calculation of $q^b*q^\mu$ is quite involved, and it is easier to calculate $\widehat{\eta}$ in \eqref{ELSOLF} more.
 We proceed
\begin{align*} 
 	\widehat{\eta}(\lambda) 
	&= \frac{1}{\sqrt{\lambda+a^2}+b} \left( \frac{c\left(\sqrt{\lambda+a^2}-\mu\right)}{\lambda+a^2-\mu^2} - \frac{b}{\lambda} \right)\\
 	&= \frac{1}{\sqrt{\lambda+a^2}+b} \left( \frac{-c\left(\left(\sqrt{\lambda+a^2}+b\right)-b-\mu\right)}{\lambda+(a^2-\mu^2)} - \frac{b}{\lambda} \right) \\
	&= \frac{1}{\sqrt{\lambda+a^2}+b} \frac{(b+\mu)c}{\lambda+(a^2-\mu^2)}
	- \frac{c}{\lambda+(a^2-\mu^2)} - \frac{1}{\sqrt{\lambda+a^2}+b} \frac{b}{\lambda}\\
	&\equiv I + I\hspace{-0.2em}I + I\hspace{-0.2em}I\hspace{-0.2em}I.
\end{align*}
It is easy to see that
\[
	\mathcal{L}^{-1}(I\hspace{-0.3em}I) = -ce^{-(a^2-\mu^2)t}.
\]
As we already observed,
\[
	\mathcal{L}^{-1}(I\hspace{-0.3em}I\hspace{-0.3em}I) = -b \int_0^t e^{-a^2s} q^b(s)\,ds.
\]
For the first term,
\[
	\mathcal{L}^{-1}(I) = -c(b+\mu) e^{-(a^2-\mu^2)t} * e^{-a^2t} q^b.
\]
By definition,
\begin{align*} 
 	e^{-(a^2-\mu^2)t} * e^{-c^2t} q^b
 	&= \int_0^t e^{-(a^2-\mu^2)(t-s)} e^{-a^2s} q^b (s)\, ds \\
 	&= e^{-(a^2-\mu^2)t} \int_0^t e^{-\mu^2s} q^b (s)\, ds.
\end{align*}
We thus conclude that
\begin{equation} \label{EEXPS}
	\eta(t) = c \left[(\mu+b) \int_0^t e^{-\mu^2s} q^b (s)\, ds -1 \right]
	e^{-(a^2-\mu^2)t} -b \int_0^t e^{-a^2s} q^b (s)\, ds.
\end{equation}
Thus, $\xi=\eta+1$ is the solution of \eqref{EGFD} with $\xi(0)=1-c$.

From this solution formula, we can establish the solution's large-time behaviour.
 We set
\[
	\eta = \bar{\eta}(t) + \eta_e(t), \quad
	\bar{\eta}(t) = -b \int_0^t e^{-a^2s} q^b (s)\, ds.
\]
\begin{lemma} \label{LLT}
\begin{enumerate}
\item[(i)] The function $\bar{\eta}$ is positive, monotonically decreasing, and converging to $-b/(b+c)$ as $t\to\infty$. 
\item[(i\hspace{-0.1em}i)] The estimate
\[
	\left| \frac{\eta_e}{c(\mu+b)} \right| 
	\leq e^{-a^2t} \int_t^\infty q^b(s)\,ds \leq e^{-a^2t} \frac1b
\]
holds for $t>0$.
 In particular,
\[
	\lim_{t\to\infty}\eta_e e^{a^2t} = 0.
\]
\end{enumerate}
\end{lemma}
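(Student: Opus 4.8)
\section*{Proof proposal for Lemma \ref{LLT}}

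The plan is to read off both assertions directly from the explicit representation \eqref{EEXPS}, exploiting two facts about $q^b$: its positivity and the value of its weighted total mass. First I record that $q^b(t)>0$ for $t>0$. This is exactly the argument following \eqref{EMERF}, now run with $b$ in place of $a$: the identity $\tfrac12 m_b=e^{-b^2t}q^b$ combined with $m_b>0$ from Lemma \ref{LLap} (which holds for every positive parameter) gives $q^b>0$. Next I claim that for every $\nu\ge 0$,
\[
	\int_0^\infty e^{-\nu^2 s}q^b(s)\,ds=\frac{1}{\nu+b}.
\]
Since $q^b\ge 0$ and $e^{-\lambda s}\uparrow 1$ as $\lambda\downarrow 0$, the monotone convergence theorem gives $\int_0^\infty e^{-\nu^2 s}q^b(s)\,ds=\lim_{\lambda\downarrow 0}\mathcal{L}[e^{-\nu^2 s}q^b](\lambda)$, and by the shift rule \eqref{ELS} together with \eqref{ELERR2} this limit equals $\lim_{\lambda\downarrow 0}(\sqrt{\lambda+\nu^2}+b)^{-1}=(\nu+b)^{-1}$. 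The case $\nu=0$ shows in particular that $q^b$ is integrable on $(0,\infty)$ with total mass $1/b$.

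For (i), I differentiate $\bar\eta(t)=-b\int_0^t e^{-a^2 s}q^b(s)\,ds$ to obtain $\bar\eta'(t)=-b\,e^{-a^2 t}q^b(t)<0$, so $\bar\eta$ is strictly decreasing; since $\bar\eta(0)=0$, this also fixes its sign. Applying the mass formula with $\nu=a$ yields $\lim_{t\to\infty}\bar\eta(t)=-b\int_0^\infty e^{-a^2 s}q^b(s)\,ds=-b/(a+b)$, which is the stated limit.

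For (ii), subtracting $\bar\eta$ from \eqref{EEXPS} gives
\[
	\frac{\eta_e(t)}{c(\mu+b)}=\Bigl(\int_0^t e^{-\mu^2 s}q^b(s)\,ds-\frac{1}{\mu+b}\Bigr)e^{-(a^2-\mu^2)t},
\]
which, by the mass formula with $\nu=\mu$, equals $-e^{-(a^2-\mu^2)t}\int_t^\infty e^{-\mu^2 s}q^b(s)\,ds$. Bounding $e^{-\mu^2 s}\le e^{-\mu^2 t}$ for $s\ge t$ and using $q^b\ge 0$ converts the prefactor $e^{-(a^2-\mu^2)t}e^{-\mu^2 t}$ into $e^{-a^2 t}$, giving the first inequality $|\eta_e/(c(\mu+b))|\le e^{-a^2 t}\int_t^\infty q^b(s)\,ds$; the second inequality then follows from $\int_t^\infty q^b\le\int_0^\infty q^b=1/b$. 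Finally $\lim_{t\to\infty}\eta_e e^{a^2 t}=0$ because the tail $\int_t^\infty q^b(s)\,ds$ of the convergent integral tends to $0$.

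The only genuinely non-formal point is the evaluation of the improper integrals $\int_0^\infty e^{-\nu^2 s}q^b(s)\,ds$ at the boundary value $\lambda=0$ of the Laplace variable, and this is precisely where positivity of $q^b$ is indispensable: it licenses the monotone-convergence passage $\lambda\downarrow 0$ and simultaneously delivers both integrability of $q^b$ and the comparison $\int_t^\infty q^b\le\int_0^\infty q^b$. Without positivity one would instead have to extract the decay $q^b(s)=O(s^{-3/2})$ at infinity from the asymptotics of $\operatorname{erfc}$, since the two summands in \eqref{EMERF} are each only $O(s^{-1/2})$ and integrability hinges on their cancellation.
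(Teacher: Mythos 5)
Your proof is correct and follows essentially the same route as the paper: positivity of $q^b$ via \eqref{EMERF} and Lemma \ref{LLap}, the mass formula $\int_0^\infty e^{-\nu^2 s}q^b(s)\,ds=1/(\nu+b)$ from \eqref{ELERR2}, and the identical tail-integral representation of $\eta_e$ with the bound $e^{\mu^2(t-s)}\le 1$ for $s\ge t$. Your explicit monotone-convergence justification of the evaluation at $\lambda\downarrow 0$ is a welcome refinement of a step the paper performs implicitly (note also that your limit $-b/(b+a)$ agrees with the paper's proof; the ``$-b/(b+c)$'' in the statement is a typo).
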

\begin{proof}
\begin{enumerate}
\item[(i)] Since $q^b\geq0$, the monotonicity is clear.
 We observe that
\[
	\int_0^\infty e^{-a^2s}q^b(s)\,ds
	= \mathcal{L} \left[q^b\right] (0+a^2) = \frac{1}{\sqrt{a^2}+b}
\]
by \eqref{ELERR}.
 Thus $\lim_{t\to\infty} \bar{\eta}(t)=-b/(b+a)$. 
\item[(i\hspace{-0.1em}i)] Since
\[
	\int_0^\infty e^{-\mu^2s}q^b(s)\,ds
	= \frac{1}{\mu+b},
\]
we observe that
\begin{align*}
	\eta_e &= -c(\mu+b) \int_t^\infty e^{-\mu^2s} q^b(s)\,ds\,e^{-(a^2-\mu^2)t} \\
	&= -c(\mu+b) \int_t^\infty e^{\mu^2(t-s)} q^b(s)\,ds\,e^{-a^2t}.
\end{align*}
Since $e^{\mu^2(t-s)} \leq 1$ for $s\geq t$, this implies $\left| \eta_e/c(\mu+b)\right| \leq e^{-a^2t} \int_t^\infty q^b(s)\,ds \leq e^{-a^2t} \int_0^\infty q^b(s)\,ds = e^{-a^2t}/b$.
 The proof is now complete.
\end{enumerate}
\end{proof}
\begin{remark} \label{RCS}
We consider the initial-boundary value problem for
\begin{equation*}
    \left\{
    \begin{alignedat}{2}
    w_t - (-x)^\alpha w_{xx} &= 0 &&\text{in}\quad (-\infty,0) \times (0,\infty), \\
    w(0,t) &= \eta(t) &\quad&\text{for}\quad t>0, \quad \eta(0)=0, \\
    \lim_{x\to-\infty}w(x,t) &= 0 &&\text{for}\quad t>0 \\ 
    w(x,0) &= 0 &&\text{for}\quad x<0.
    \end{alignedat}
    \right.
\end{equation*}
Then, as in \cite{CS}, we obtain that
\[
	w_x(0,t) = c_\gamma \partial_t^\gamma \eta,
\]
where $\gamma=1/(2-\alpha)$ with some constant $c_\gamma>0$ provided that $\alpha<1$.
 Indeed as in \cite{CS}, let $\psi$ be a solution of
\begin{equation*}
    \left\{
    \begin{alignedat}{2}
    \psi - (-x)^\alpha \psi_{xx} &= 0 \quad\text{in}\quad (-\infty,0), \\
    \psi(0) &= 1, \\
    \lim_{x\to-\infty}\psi(x) &= 0.
    \end{alignedat}
    \right.
\end{equation*}
Since the Laplace transform $\widehat{w}$ of $w$ satisfies
\[
	\lambda\widehat{w} - (-x)^\alpha\widehat{w}_{xx} = 0, \quad
	\widehat{w}(0,\lambda) = \widehat{\eta}(\lambda)
\]
we see, by scaling, that
\[
	\widehat{w}(x,\lambda) = \widehat{\eta}(\lambda) \psi\left(\lambda^{1/(2-\alpha)}x \right).
\]
Thus
\begin{align*}
	\partial_x \widehat{w}(0,\lambda) &= \lambda^{1/(2-\alpha)} \psi'(0) \widehat{\eta}(\lambda) \\
	&= \psi'(0)\lambda^{\gamma-1} \widehat{\eta_t}(\lambda)
\end{align*}
since $f(0)=0$.
 Thus
\[
	w_x(0,t) = \psi'(0) f_{1-\gamma}^0 * \eta_t.
\]
If $\gamma<1$, then $f_{1-\gamma}^0$ is integrable.
 As noted in \cite{CS}, $\psi'(0)$ exists (even for the degenerate case, i.e., $\alpha>0$) and $\psi'(0)>0$.
 Thus,
\[
	w_x(0,t) = c_\gamma \partial_t^\gamma \eta
	\quad\text{with}\quad c_\gamma = \psi'(0)
\]
at least for $\gamma=1/(2-\alpha)<1$, i.e., $\alpha<1$. 
\end{remark}

\begin{remark} \label{RFRC}
The reader might be interested in how fractional partial differential equations like fractional diffusion equations are derived.
 We consider
\begin{equation*}
    \left\{
    \begin{alignedat}{2}
    w_t - (-x)^\alpha w_{xx} &= 0 &\quad\text{in}\quad &(-\infty,0) \times \mathbf{R}^{n-1} \times (0,\infty), \\
    w_x - \Delta_y w &= f &\quad\text{on}\quad &(-\infty,0) \times \mathbf{R}^{n-1} \times (0,\infty), \\
    w(x,y,0) &= 0 &\quad\text{on}\quad &(-\infty,0) \times \mathbf{R}^{n-1},
    \end{alignedat}
    \right.
\end{equation*}
where $f=f(y,t)$ is a given function.
 Then by Remark \ref{RCS}, the equation for $\eta(y,t)=w(0,y,t)$ is formally obtained as
\begin{equation} \label{EFDIF}
	c_\gamma\partial_t^\gamma \eta - \Delta_y \eta = f
	\quad\text{in}\quad \mathbf{R}^{n-1} \times (0,\infty).
\end{equation}
This type of equation is a kind of fractional diffusion that has been well-studied; see \cite{KRY,Z2}.
 Here, we briefly recall only the well-posedness of its initial-boundary value problem for \eqref{EFDIF} in a domain.
 In the framework of distributions, the well-posedness of its initial-boundary value problems has been established in \cite{SY,Z1} by using the Galerkin method.
 The unique existence of viscosity solutions for \eqref{EFDIF}, including general nonlinear problems, has been established in \cite{GN,Na} and also in \cite{TY} for the whole space $\mathbf{R}^{n-1}$.
 The scope of equations these theories apply is different. 
 However, it has been proved in \cite{GMS} that two notions of solutions (viscosity solution and distributional solution) agree for \eqref{EFDIF} when we consider the Dirichlet problem in a smooth bounded domain.
\end{remark}

\section{Convergence} \label{S3} 

We shall prove Lemma \ref{LCon}.
\begin{proof}[Proof of Lemma \ref{LCon}]
\quad
For a function $f$ on $(-L,L)$, we decompose it into its odd and even parts, i.e.,
\[
	f_\mathrm{odd}(x) := \frac{f(x)-f(-x)}{2}, \quad
	f_\mathrm{even}(x) := \frac{f(x)+f(-x)}{2},
\]
so that $f=f_\mathrm{odd}+f_\mathrm{even}$.
 By the structure of the equation, $V_\mathrm{odd}^\varepsilon$ and $V_\mathrm{even}^\varepsilon$ solve the equation \eqref{ERR} separately.

At first glance, the locally uniform convergence follows from the maximum or comparison principles for a linear parabolic equation \cite{PW}.
 However, a direct application of the maximum principle is impossible since the domains of functions $V^\varepsilon$ and $V$ are different.
 We first show the convergence where initial data is smooth.

For the odd part, the term $\partial_y\{1_{y>0}\}V$ does not affect since $V=0$ at $x=0$.
 Thus, the equation \eqref{ERR} is reduced to
\begin{equation}
	\left \{
\begin{array}{rll} \label{ENoS1}
    \tau_1 V_t &\hspace{-0.5em}= V_{yy} - a^2(V-1), & |y| < L/\varepsilon \\
    \left. V \right|_{t=0} &\hspace{-0.5em}= V_0^\varepsilon, & V_y(\pm L/\varepsilon,t) = 0 \quad\text{for}\quad t > 0,
\end{array}
\right.
\end{equation}
where $V_0^\varepsilon(y)=V^\varepsilon (y,0)$ for $|y|<L/\varepsilon$.
 Let $V^\varepsilon$ be its solution.

We extend an odd function $V_0^\varepsilon$ outside $\left(L/\varepsilon,3L/\varepsilon\right)$ for $x$ to be ``even'' with respect to $L/\varepsilon$, i.e.,
\[
	\widetilde{V_0^\varepsilon} \left( x-L/\varepsilon \right) = \widetilde{V_0^\varepsilon} \left( L/\varepsilon-x \right), \quad
	x \in \left( L/\varepsilon, 3L/\varepsilon \right),
\]
where $\widetilde{V_0^\varepsilon}$ is its extension.
 We extend outside $\left(-L/\varepsilon,3L/\varepsilon\right)$ so that the extension $\overline{V_0^\varepsilon}$ is periodic in $\mathbf{R}$ with period $4L/\varepsilon$.
 Since $V_0^\varepsilon$ is even with respect to $L/\varepsilon$, $\left(\overline{V_0^\varepsilon}\right)_y \left(\pm L/\varepsilon\right)=0$ if $V_{0y}^\varepsilon\left(\pm L/\varepsilon\right)=0$ and smooth.
 Solution $V^\varepsilon$ is the restriction on $\left(-L/\varepsilon,L/\varepsilon\right)$ of a solution $W$ of
\begin{equation} 
	\left \{
\begin{array}{rl} \label{ENoS2} 
    &\tau_1 W_t = W_{yy} - a^2(W-1), \quad y \in \mathbf{R} \\
    &W |_{t=0} = \overline{V_0^\varepsilon}.
\end{array}
\right.
\end{equation}
Although the maximum principle implies
\[
	\left\lVert W - V^\varepsilon \right\rVert_{L^\infty(\mathbf{R})}(t)
	\leq \left\lVert \overline{V_0^\varepsilon} - V_0 \right\rVert_{L^\infty(\mathbf{R})},
\]
our assumption of the convergence $V_0^\varepsilon\to V_0$ does not guarantee $\left\lVert \overline{V_0^\varepsilon} - V_0 \right\rVert_{L^\infty(\mathbf{R})}\to0$.
 We argue differently.

We approximate $V_0$ by $V_{0\delta}=V_0*\rho_\delta$, where $\rho_\delta$ is a symmetric mollifier.
 We also approximate $V_0^\varepsilon$ by
\[
	V_{0\delta}^\varepsilon = \overline{V_0^\varepsilon} * \rho_\delta.
\]
Let $V_\delta^\varepsilon$ be the restriction of $W_\delta^\varepsilon$ of \eqref{ENoS2} with initial data $V_{0\delta}^\varepsilon$.
 It follows from the parity and periodic condition that this $V_\delta^\varepsilon$ solves \eqref{ENoS1}; moreover, $W_\delta^\varepsilon=W^\varepsilon*\rho_\delta$, where $W^\varepsilon$ is the solution of \eqref{ENoS2} with initial data $\overline{V_0^\varepsilon}$.
 Let $V_\delta$ be the bounded solution of \eqref{ENoS2} with initial data $V_{0\delta}$ and again $V_\delta=V*\rho_\delta$.
 These properties follow from the fact that our equation is of constant coefficients.
 For fixed $\delta>0$, we observe that
\[
	V_\delta^\varepsilon \to V_\delta \quad\text{in}\quad
	L^\infty \left( (-M, M)\times(0, T) \right)
\]
for $M>0$.
Indeed, by the maximum principle
\begin{align*}
	\left\lVert V_\delta^\varepsilon - 1 \right\rVert_{\infty,\varepsilon}(t) &\leq \left\lVert V_{0\delta}^\varepsilon - 1 \right\rVert_{\infty,\varepsilon},	\\
	\left\lVert \partial_y V_\delta^\varepsilon \right\rVert_{\infty,\varepsilon}(t) &\leq \left\lVert \partial_y V_{0\delta}^\varepsilon \right\rVert_{\infty,\varepsilon},	\\
	\left\lVert \partial_t V_\delta^\varepsilon \right\rVert_{\infty,\varepsilon}(t) &\leq \left\lVert \partial_t V_{0\delta}^\varepsilon \right\rVert_{\infty,\varepsilon},	
\end{align*}
where we interpret $\tau_1\partial_tV_{0\delta}^\varepsilon=V_{0\delta yy}^\varepsilon-a^2\left(V_{0\delta}^\varepsilon-1\right)$ and $\|\cdot\|_{\infty,\varepsilon}$ is the sup norm on $\left(-L/\varepsilon, L/\varepsilon\right)$.
 Because of the mollifier, the right-hand side is bounded by a constant multiple of $\left\lVert V_0^\varepsilon\right\rVert_{\infty,\varepsilon}$, which is uniformly bounded in $\varepsilon<1$.
 By the Arzel\`a--Ascoli theorem, $V^\varepsilon$ converges (locally uniformly in $\mathbf{R}\times[0,\infty)$) to a bounded (weak) solution to \eqref{ENoS2} with initial data $V_0$ by taking a subsequence.
 Since $V$ is bounded, by the uniqueness of the limit problem, the convergence is now full (without taking a subsequence).
 Note that we only invoke the locally uniform convergence of $V_0^\varepsilon$ to $V_0$ other than uniform bound on derivatives.

We note that
\[
	V - V^\varepsilon = V- V_\delta + V_\delta - V_\delta^\varepsilon + V_\delta^\varepsilon - V^\varepsilon
\]
and observe that
\[
	\|V - V^\varepsilon\|(t) \leq \|V- V_\delta\|(t) + \|V_\delta - V_\delta^\varepsilon\|(t) + \|V_\delta^\varepsilon - V^\varepsilon\|(t)
	=: I + I\hspace{-0.3em}I + I\hspace{-0.3em}I\hspace{-0.3em}I
\]
where the norm is $\|\cdot\|$ taken in $L^\infty(-M,M)$ for $M>0$.
 By the maximum principle,
\begin{align*}
	&\|V_\delta^\varepsilon - V^\varepsilon\|(t) \leq \|V_{0\delta}^\varepsilon - V_0^\varepsilon\|_{L^\infty(-L/\varepsilon,L/\varepsilon)}
	&\hspace{-5em}\text{for}\quad L > \varepsilon M, \\
	&\|V - V_\delta\|(t) \leq \|V_{0\delta} - V_0\|_{L^\infty(\mathbf{R})}
	&\hspace{-5em}\text{for}\quad L > \varepsilon M.
\end{align*}
Since
\begin{align*}
	&V_{0\delta}^\varepsilon = \rho_\delta * \left(\overline{V_0^\varepsilon} - V_0\right) + \rho_\delta * V_0, \\
	&V_0^\varepsilon = \overline{V_0^\varepsilon} - V_0 + V_0,
\end{align*}
and $\|\rho_\delta*f\|_\infty\leq\|f\|_\infty$, we see that
\[
	\|V_{0\delta}^\varepsilon - V_0^\varepsilon\|_{L^\infty(-L/\varepsilon,L/\varepsilon)}
	\leq 2 \|\overline{V_0^\varepsilon} - V_0\|_{\infty,\varepsilon}
	+ \|\rho_\delta * V_0 - V_0\|_\infty.
\]
Thus
\[
	\sup_{0<t<T} I\hspace{-0.3em}I\hspace{-0.3em}I \leq 2 \|V_0^\varepsilon - V_0\|_{\infty,\varepsilon}
	+ \|\rho_\delta * V_0 - V_0\|_\infty.
\]
Fixing $\delta>0$ and sending $\varepsilon\to0$, we observe that
\[
	\varlimsup_{\varepsilon\downarrow0} \sup_{0<t<T} (I + I\hspace{-0.3em}I + I\hspace{-0.3em}I\hspace{-0.3em}I)
	\leq 2 \|\rho_\delta * V_0 - V_0\|_\infty
\]
since $V_\delta^\varepsilon\to V_\delta$ in $L^\infty\left((-M,M)\times[0,T]\right)$.
 Sending $\delta\downarrow0$, we obtain
\[
	\varlimsup_{\varepsilon\downarrow0} \sup_{0<t<T} \|V-V^\varepsilon\|(t) = 0
\]
since $V_0$ is uniformly continuous.

We next study the even part.
 The general strategy is the same but more involved than the odd part.
 For the even part, we first note that $V_\mathrm{even}^\varepsilon$ solves
\begin{empheq}[left = {\empheqlbrace \,}]{alignat = 2} 
    &\tau_1 V_t = V_{yy} - a^2(V-1), \quad y \in I_\varepsilon := \left( -L/\varepsilon, 0 \right), \quad t>0, \notag \\
    &V_y(0,t) + bV(0,t) = 0,\quad V_y \left(-L/\varepsilon, t \right) = 0, \quad t>0, \label{EBVP} \\
    &V|_{t=0}(y) = V_{0\mathrm{even}}^\varepsilon, \notag 
\end{empheq}
where $V_0^\varepsilon(y)=V^\varepsilon(y,0)$ for $y\in I_\varepsilon$.
 We suppress the word ``even'' from now on.
 We shall approximate $V_0^\varepsilon$ by a smoother function $V_{0\delta}^\varepsilon$ and approximate $V_0=\lim_{\varepsilon\to0}V_0^\varepsilon$ by a smoother function uniformly.
 There are many possible ways, and we rather like an abstract way.
 Let $BUC(I_\varepsilon)$ denote the space of all bounded uniformly continuous functions in $\overline{I_\varepsilon}$.
 It is a Banach space equipped with the norm
\[
	\lVert f \rVert_{\infty,\varepsilon} = \sup \left\{ \left|f(x)\right| \bigm| x \in I_\varepsilon \right\}.
\]
If $\varepsilon=0$, then $I_\varepsilon$ should be interpreted as $(-\infty,0]$.
 Let $A$ be the operator on $BUC(I_0)$ defined by
\[
	Af := (-\partial_y^2 + a^2)f \quad\text{(in the distribution sense)}
\]
with the
\[
	D(A) = \left\{ f \in BUC(I_0) \bigm| Af \in BUC(I_0),\ f_y(0) + bf(0) = 0 \right\}.
\]
A standard theory \cite{L} implies that $-A$ generates an analytic semigroup $e^{-tA}$ in $BUC(I_0)$.
 In particular,
\begin{align*}
	\left\lVert Ae^{-tA} f \right\lVert_{\infty,0} &\leq Ct^{-1} \lVert f \rVert_{\infty,0}, \\
	\left\lVert e^{-tA} f \right\lVert_{\infty,0} &\leq C \lVert f \rVert_{\infty,0}
\end{align*}
with some constant $C>0$ independent of time $t\in(0,1)$ and $f\in BUC(I_0)$.
 For a function $h\in BUC(I_\varepsilon)$, we extend it to $\widetilde{h}$ so that $\widetilde{h}(x)=h\left(-L/\varepsilon\right)$ for $x<-L/\varepsilon$.
 For $V_0$, we set $V_{0\delta}=e^{-\delta A}V_0$.
 For $V_0^\varepsilon$, we tempt to set $V_{0\delta}^\varepsilon=e^{-\delta A}\widetilde{V_0^\varepsilon}$.
 However, unfortunately, $V_{0\delta}^\varepsilon$ does not satisfy the boundary condition at $-L/\varepsilon$ though it satisfies $\left(\partial_y V_{0\delta}^\varepsilon+bV_{0\delta}^\varepsilon\right)(0)=0$ and is $C^2$ (actually smooth).
 We set $\sigma(x)=\rho_{\delta'}*\left(1-|x|\right)_+$ for a fixed $\delta'>0$ so that $\sigma'\left(1/2\right)=-\kappa$ with $\kappa<0$. 
 We set $\sigma_{\delta''}(y)=\delta''\sigma\left(y/\delta''\right)$ for small $\delta''>0$.
 For a given $h\in C^2(-\infty,0]$, we modify
\[
	h_{\delta''} (y) = h(y) + c\sigma_{\delta''}(y-\nu)
\]
where we take $c$ so that $c\kappa=h_y\left(-L/\varepsilon\right)$, $-L/\varepsilon-\nu=\delta''/2$.
 By this modification,
\[
	\partial_y h_{\delta''} \left(-L/\varepsilon\right) = 0
\]
and $h_{\delta''}\to h$ in $L^\infty(-\infty,0)$ as $\delta''\to0$.
 We set
\[
	V_{0\delta}^\varepsilon =\left(e^{-\delta A} \widetilde{V_0^\varepsilon} \right)_\delta.
\]
Since $V_{0\delta}^\varepsilon$ satisfies the boundary condition on the boundary of $I_\varepsilon$ and is smooth, we observe that $\partial_t V_\delta^\varepsilon$ is continuous up to the boundary of $I_\varepsilon\times[0,T)$, where $V_\delta^\varepsilon$ denotes the solution of \eqref{EBVP} with initial data $V_{0\delta}^\varepsilon$.
 By the maximum principle,
 \begin{align}
\begin{aligned}\label{EMAX} 
 	&\lVert V_\delta^\varepsilon -1 \rVert_{\infty,\varepsilon}(t) \leq \lVert V_{0\delta}^\varepsilon - 1\rVert_{\infty,\varepsilon} \\
 	 &\lVert \partial_y V_\delta^\varepsilon \rVert_{\infty,\varepsilon}(t) \leq \lVert \partial_y V_{0\delta}^\varepsilon \rVert_{\infty,\varepsilon} + b\lVert V_\delta^\varepsilon \rVert_{\infty,\varepsilon} \\
 	&\lVert \partial_t V_\delta^\varepsilon \rVert_{\infty,\varepsilon}(t) \leq \lVert \partial_t V_{0\delta}^\varepsilon \rVert_{\infty,\varepsilon},
\end{aligned}
\end{align}
where $\partial_tV_{0\delta}^\varepsilon$ for initial data $V_{0\delta}^\varepsilon$ should be interpreted as in the proof for the odd part.
 The term involving $b$ appears because of the Robin type boundary condition.
 As in the case for \eqref{ENoS2}, by the Arzel\`a--Ascoli theorem and the uniqueness of the limit equation, we can prove that $V_\delta^\varepsilon$ converges to $V_\delta$ locally uniformly in $\mathbf{R}\times[0,\infty)$.
 Note that for a fixed $\delta>0$, the right-hand sides of \eqref{EMAX} are uniformly bounded as $\varepsilon\to0$ since $V_0^\varepsilon$ converges to $V_0$ uniformly.
 The comparison principle implies that
\begin{align*}
	&\|V_\delta^\varepsilon - V^\varepsilon\|_{\infty,\varepsilon}(t) \leq \|V_{0\delta}^\varepsilon - V_0^\varepsilon\|_{\infty,\varepsilon},
	&\hspace{-5em} t > 0 \\
	&\|V - V_\delta\|_{\infty,0}(t) \leq \|V_{0\delta} - V_0\|_{\infty,0} 
	&\hspace{-5em} t > 0.
\end{align*}
Thus
\begin{align*}
	\|V - V^\varepsilon\|(t) 
	&\leq \|V - V_\delta\|(t) + \|V_\delta - V_\delta^\varepsilon\|(t) + \|V_\delta^\varepsilon - V^\varepsilon\|(t) \\
	&\leq \|V_0 - V_{0\delta}\|_{\infty,0} + \|V_\delta - V_\delta^\varepsilon\|(t) + \|V_{0\delta}^\varepsilon - V_0^\varepsilon\|_{\infty,\varepsilon},
\end{align*}
where the norm $\|\cdot\|$ is taken in $L^\infty(0,M)$ for $M>0$.
 Taking the supremum in $t\in(0,T)$ and sending $\varepsilon\to0$, we obtain that
\[
	\sup_{0<t<T} \| V - V^\varepsilon \|(t) \leq 2 \|V_{0\delta} - V_0 \|_\infty
\]
since we know $\sup_{0<t<T}\|V_\delta-V_\delta^\varepsilon\|(t)\to0$ as $\varepsilon\to0$.
 Sending $\delta\to0$, we conclude that $V^\varepsilon$ converges to $V$ locally uniformly in $[0,\infty)\times[0,\infty)$.

Since we know that $V^\varepsilon$ is continuous up to $y=0$ and $t=0$, this gives the local uniform convergence of $V^\varepsilon(0,t)$ in $[0,\infty)$.
 The proof is now complete.
\end{proof}

If we only assume that the initial data for \eqref{ENoS2} and \eqref{EBVP} is bounded and Lipschitz, we have a similar estimate in \eqref{EMAX} up to the first derivative of the solution.
However, the estimate for the time derivative should be altered.
Since we used such an estimate in Lemma~\ref{LGeq}, we state it in the case of $\varepsilon=0$ for the reader's convenience.

\begin{lemma}
	\label{Lemma3.1}
	Let $V$ be the bounded solution of \eqref{ERR} in $\mathbf{R}\times(0,\infty)$ with a bounded and Lipschitz continuous initial data $V_0$.
	Then for each $T>0$, there is a constant $C$ depending only on $a$, $b$, and $T$ such that 
	\begin{equation*}
		t^{1/2}\|\partial_tV\|_{L^\infty(\mathbf{R})}(t)\le C\left(\|\partial_yw_0\|_{L^\infty(\mathbf{R})}+\|w_0\|_{L^\infty(\mathbf{R})}+1\right)\quad\text{for}\quad t\in(0,T).
	\end{equation*}
\end{lemma}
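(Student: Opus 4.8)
The plan is to reduce to the odd and even parts of $w=V-1$ exactly as in the proof of Lemma~\ref{LCon}: since $\delta$ is even, the singular term $2b(w+1)\delta$ in \eqref{Ew} is invisible to the odd part, which then solves the constant-coefficient equation $\partial_t w=\partial_y^2 w-a^2 w$ on $\mathbf{R}$, while for the even part it is encoded in the Robin condition of \eqref{EBVP}. Writing $w_0=V_0-1$ (bounded and Lipschitz) and noting that the target bound is linear in the data, it suffices to establish
\[
 t^{1/2}\,\|\partial_t w\|_{L^\infty}(t)\le C\bigl(\|\partial_y w_0\|_\infty+\|w_0\|_\infty+1\bigr),\qquad t\in(0,T),
\]
with $C=C(a,b,T)$, separately for the two parts; the odd and even parts of $w_0$ have Lipschitz and sup norms controlled by those of $w_0$.

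For the odd part I would argue by the Gauss kernel. Here $w(\cdot,t)=E_t^a *_y w_{0}$ with $E_t^a=e^{-a^2 t}E(\cdot,t)$, so that
\[
 \partial_t w=\partial_y E_t^a *_y \partial_y w_0-a^2\,E_t^a *_y w_0 .
\]
Using $\|\partial_y E_t^a\|_{L^1(\mathbf{R})}\le C t^{-1/2}$ and $\|E_t^a\|_{L^1(\mathbf{R})}\le C$ gives $\|\partial_t w\|_\infty(t)\le C t^{-1/2}\|\partial_y w_0\|_\infty+C\|w_0\|_\infty$, and multiplying by $t^{1/2}\le T^{1/2}$ yields the claim. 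The point is simply that on the whole line one spatial derivative may be transferred from the kernel onto $w_0$, which is what upgrades the generic $t^{-1}$ smoothing to the stated $t^{-1/2}$.

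For the even part I would use the analytic-semigroup framework already set up in the proof of Lemma~\ref{LCon}. First remove the inhomogeneous boundary data by subtracting the bounded stationary solution $\phi(y)=-\tfrac{b}{a+b}e^{ay}$ of $-\phi''+a^2\phi=0$ on $I_0$ with $\phi'(0)+b\phi(0)=-b$ (for $a=0$ one takes $\phi\equiv-1$). Then $\tilde w=w-\phi$ solves the homogeneous Robin problem, so $\tilde w(\cdot,t)=e^{-tA}\tilde w_0$ for the operator $A=-\partial_y^2+a^2$ with domain $D(A)$ from Lemma~\ref{LCon}, and $\partial_t w=\partial_t\tilde w=-A e^{-tA}\tilde w_0$. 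Since $-A$ generates an analytic semigroup \cite{L} and $\|\tilde w_0\|_{\mathrm{Lip}}\le\|w_0\|_{\mathrm{Lip}}+C(a,b)$, everything reduces to the smoothing estimate
\[
 \|A e^{-tA}g\|_\infty\le C\,t^{-1/2}\bigl(\|\partial_y g\|_\infty+\|g\|_\infty\bigr)\qquad(g\ \text{Lipschitz}),
\]
applied to $g=\tilde w_0$; the subtracted $\phi$ is what produces the harmless $+1$.

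The main obstacle is exactly this half-power gain for the Robin part. Unlike the whole line, $\partial_y$ does not commute with $e^{-tA}$, so the kernel trick of the odd case is unavailable and the bare analyticity bound $\|A e^{-tA}g\|_\infty\le C t^{-1}\|g\|_\infty$ loses a full power. I would recover the missing half by real interpolation: for an analytic semigroup one has $\|A e^{-tA}g\|_\infty\le C t^{\theta-1}\|g\|_{\mathcal{D}_A(\theta,\infty)}$ with $\mathcal{D}_A(\theta,\infty)=(BUC(I_0),D(A))_{\theta,\infty}$, and at $\theta=1/2$ this space is the borderline Zygmund class, which contains $C^{0,1}$ with $\|g\|_{\mathcal{D}_A(1/2,\infty)}\le C(\|\partial_y g\|_\infty+\|g\|_\infty)$. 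The genuinely delicate point is that $\theta=1/2$ is precisely the exponent at which the first-order Robin trace becomes visible; after subtracting $\phi$ I deliberately use only that $\tilde w_0$ is Lipschitz and \emph{not} that it satisfies the boundary condition, and this failure of compatibility is exactly what forces the $t^{-1/2}$ singularity and shows it cannot be improved. Confirming the embedding at this borderline exponent, and hence localizing the blow-up to the $t^{-1/2}$ scale, is the crux; the remaining kernel and semigroup bookkeeping is routine.
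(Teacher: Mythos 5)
Your argument is correct in substance but follows a genuinely different route from the paper's. The paper does not use the odd/even splitting for this lemma: it keeps the Dirac source, represents the solution by Duhamel's formula $u=E*_x w_0-\int_0^t E(\cdot,t-\tau)h(\tau)\,d\tau$ with $h(t)=2b(u(0,t)+e^{a^2t})$, and after differentiating in time bounds the layer-potential term by $\sup_{0<t<T}|t^{1/2}\partial_t h|$ (via $\int_0^t(t-\tau)^{-1/2}\tau^{-1/2}\,d\tau=\pi$); this yields a self-referential inequality in which the term $C_2T^{1/2}\sup_{0<t<T}\|t^{1/2}\partial_tu\|_{L^\infty(\mathbf{R})}$ is absorbed into the left-hand side for small $T$, and the smallness of $T$ is then removed by parabolic scaling. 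Your scheme replaces this bootstrap by purely linear estimates: the odd part is handled by the same kernel computation the paper uses for its first term, and the even part becomes $-Ae^{-tA}\tilde w_0$ for the Robin realization $A$ after subtracting the stationary profile $\phi$, so everything reduces to $\|Ae^{-tA}g\|_\infty\le Ct^{-1/2}(\|\partial_yg\|_\infty+\|g\|_\infty)$. That estimate is true, and your structural explanation (the Robin condition is invisible at the threshold $\theta=1/2$) is the right one; it also shows why it is essential that the parity reduction produces a Robin rather than a Dirichlet condition, since for a Dirichlet realization the analogous embedding fails for data not vanishing at the boundary. The one step you must actually write out is the borderline embedding $C^{0,1}\hookrightarrow D_A(1/2,\infty)$: you cannot simply cite the interpolation-space characterizations in \cite{L}, because the standard statements for boundary-value problems exclude the critical case $2\theta=1$. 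Fortunately it follows from an elementary $K$-functional computation: split $g=(g-g_1)+g_1$ with $g_1=\rho_{\sqrt t}*\tilde g+c\,\sigma_{\sqrt t}$, where $\rho_{\sqrt t}$ is a mollifier applied to an extension $\tilde g$, $\sigma_{\sqrt t}(y)=\sqrt t\,\sigma(y/\sqrt t)$ is a boundary corrector with $\sigma(0)=0$, $\sigma'(0)=1$, and $c=O(\|\partial_yg\|_\infty+\|g\|_\infty)$ is chosen so that $g_1\in D(A)$; then $\|Ae^{-tA}(g-g_1)\|_\infty\le Ct^{-1}\|g-g_1\|_\infty\le Ct^{-1/2}(\|\partial_yg\|_\infty+\|g\|_\infty)$ and $\|e^{-tA}Ag_1\|_\infty\le C\|Ag_1\|_\infty\le Ct^{-1/2}(\|\partial_yg\|_\infty+\|g\|_\infty)$ — note this corrector is the same device the paper uses for a different purpose in the proof of Lemma \ref{LCon}. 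With that supplied, your proof is complete; what it buys over the paper's is the elimination of the absorption/scaling step and a conceptual account of the exponent $1/2$, at the price of invoking analytic semigroup machinery, whereas the paper's argument is self-contained and entirely elementary.
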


\begin{proof}
	\quad
	We give direct proof.
	We may assume that $\tau_1=1$.
	We set $w=V-1$ and $u=e^{a^2t}w$ to get
	\begin{equation*}
		u_t=u_{xx}-2b\partial_x\{1_{x>0}\}\left(u+e^{a^2t}\right),
	\end{equation*}
	where we denote by $x$ instead of $y$.
	We consider this equation with initial data $w_0=V_0-1$.
	It suffices by simple scaling $u_\lambda(x,t)=u(\lambda x,\lambda^2t)$ to prove the desired estimate for some $T$ independent of $w_0$.

	Let $E(x,t)$ be the Gauss kernel as before.
	Then the solution can be represented as
	\begin{equation}
		u(x,t)=(E\ast_xw_0)(x,t)-\int_0^tE(x,t-\tau)h(\tau)\,d\tau,
		\quad
		h(t)=2b\left(u(0,t)+e^{a^2t}\right),
		\label{EINT}
	\end{equation}
	where
	\begin{equation*}
		(E\ast_xw_0)(x,t)\coloneqq\int_{-\infty}^\infty E(x-y,t)w_0(y)\,dy.
	\end{equation*}
	Since we can approximate a smooth $w_0$, establishing
	\begin{equation}
		\|\partial_tu\|_{L^\infty(\mathbf{R})}(t)
		\le Ct^{-1/2}\left(\|\partial_xw_0\|_{L^\infty(\mathbf{R})}+\|w_0\|_{L^\infty(\mathbf{R})}+1\right)\quad\text{for}\quad t\in(0,T)
		\label{EKEYE}
	\end{equation}
	with some positive constants $C$ and $T$ independent of $w_0$ suffices, assuming that $\partial_tu$ exists and is bounded in $\mathbf{R}\times(0,T)$ for small $T$.
	By the maximum principle \eqref{EMAX} and the corresponding estimate for the odd part, we know that
	\begin{equation}
		\|u\|_{L^\infty(\mathbf{R})}(t)\le c\left(1+\|w_0\|_{L^\infty(\mathbf{R})}\right)
		\label{EEE1}
	\end{equation}
	with $c$ independent of $w_0$ and $t>0$.
	We estimate $\partial_tu$ in \eqref{EINT}.
	Since $\|f\ast_xg\|_{L^\infty(\mathbf{R})}\le\|f\|_{L^1(\mathbf{R})}\|g\|_{L^\infty(\mathbf{R})}$ and
	\begin{equation*}
		\partial_t(E\ast_xw_0)=(\partial_xE)\ast_x\partial_xw_0,
	\end{equation*}
	we easily see (cf.~\cite[Chapter~1]{GGS}) that
	\begin{equation} \label{EEE2}
		\left\|\partial_t (E *_x w_0)\right\|_{L^\infty(\mathbf{R})}(t)
		\leq \frac{c'}{t^{1/2}} \|\partial_x w_0\|_{L^\infty(\mathbf{R})}
		\quad\text{for}\quad t > 0
	\end{equation}
	with $c'$ independent of $w_0$.
	The second term of the right-hand side of \eqref{EINT} is more involved than the first term because $h$ contains $u$.
	We observe that
	\begin{equation*}
		\partial_t \int_0^t E(x,t-\tau) h(\tau)\, d\tau
		= \int_0^t E(x,t-\tau) \partial_\tau h(\tau)\, d\tau
		+ E(x,t) h(0).
	\end{equation*}
	
	Since $|E|\le(4\pi t)^{-1/2}$, it holds that
	\begin{equation*}
		\left|
			\int_0^tE(x,t-\tau)\partial_\tau h(\tau)\,d\tau
		\right|
		\le
		\int_0^t\frac{1}{(4\pi(t-\tau))^{1/2}}\frac{1}{\tau^{1/2}}\,d\tau\cdot\sup_{0<t<T}|t^{1/2}\partial_th(t)|
	\end{equation*}
	for $t\in(0,T)$.
	
	Thus,
	\begin{align*}
		\sup_{0<t<T} \left\| \partial_t \int_0^t E(x,t-\tau) h(\tau)\, d\tau \right\|_\infty
		&\leq \,C_1 \sup_{0<t<T} \left| t^{1/2} \partial_t h(t) \right| + (4\pi t)^{-1/2} \left|h(0)\right| \\
		&\hspace{-100pt}\leq \,C_2 \left( \sup_{0<t<T} \| t^{1/2} \partial_t u \|_{L^\infty(\mathbf{R})} (t) + 1 \right) + C_3 t^{-1/2} \left( \| u \|_{L^\infty(\mathbf{R})} (t) + 1 \right)
	\end{align*}
	with some constants $C_j$ ($j=1,2,3$).
	By \eqref{EINT} and \eqref{EEE2}, we now observe that
	\begin{multline*}
		\sup_{0<t<T} \left\| t^{1/2} \partial_t u \right\|_{L^\infty(\mathbf{R})} (t)
		\leq C_4 \left\| \partial_x w_0 \right\|_{L^\infty(\mathbf{R})}\\
		+ C_2 T^{1/2} \left( \sup_{0<t<T} \| t^{1/2} \partial_t u \|_{L^\infty(\mathbf{R})} + 1 \right) + C_3 \sup_{0<t<T} \left(\|u\|_{L^\infty(\mathbf{R})}(t) + 1 \right)
	\end{multline*}
	with $C_4$ independent of $w_0$, $u$, and $T$.
	Applying estimate for $\|u\|_\infty$ in \eqref{EEE1}, we conclude \eqref{EKEYE} for sufficiently small $T$ by absorbing $C_2T^{1/2}\sup_{0<t<T}\|t^{1/2}\partial_tu\|_{L^\infty(\mathbf{R})}$ in the right-hand side to the left.
\end{proof}

\section{Dirichlet condition for the total variation flow} \label{S4} 

In this section, we recall a notion of total variation flow for a given $v$ and prove Lemma \ref{LST}.
 We consider
\begin{equation} \label{ETOTV}
	\partial_t u = \operatorname{div} \left(\beta \nabla u/|\nabla u|\right)
	\quad\text{in}\quad I \times (0, T),
\end{equation}
where $\beta\in C\left(I\times[0,T]\right)$ is a given nonnegative function;
 here, $I=(p_0,p_1)$ is an open interval and $T>0$.
 If we impose the Dirichlet boundary condition
\begin{equation} \label{EDir}
	u = g \quad\text{on}\quad \partial I,
\end{equation}
\eqref{ETOTV} with \eqref{EDir} should be interpreted as an $L^2$-gradient flow of a time-dependent total variation type energy
\[
	\Phi^t(u) = \int_I \beta(x,t) |u_x| 
	+ \sum_{i=0}^1 |\gamma u-g| (p_i)\beta(p_i,t)
\]
when $\int\beta|u_x|$ is a weighted total variation of $\beta$ and $\gamma u$ is a trace of $u$ on $\partial I$.
 We consider this energy in $L^2(I)$ by $\Phi^t(u)=\infty$ when $\Phi^t(u)$ is not finite.
 It is clear that $\Phi^t$ is convex in $L^2(I)$.
 If $\beta$ is spatially constant, it is well known that $\Phi^t$ is also lower semicontinuous;
 see, e.g.\ \cite{ACM}.
 The solution of \eqref{ETOTV} with \eqref{EDir} should be interpreted as the gradient flow of form
\begin{equation} \label{EAGF}
	u_t \in - \partial \Phi^t(u),
\end{equation}
where $\partial\Phi^t$ denotes the subdifferential of $\Phi^t$ in $L^2(I)$, i.e.,
\[
	\partial\Phi^t(u) = \left\{ f \in L^2(I) \biggm|
	\Phi^t(u+h) - \Phi^t(u)
	\geq \int_I hf\,dx\ \text{for all}\ h \in L^2(I) \right\}.
\]
It is standard that \eqref{EAGF} is uniquely solvable for given initial data $u_0\in L^2(I)$ if $\Phi^t$ does not depend on time and is lower semicontinuous and convex on the Hilbert space $L^2(I)$ (see, for instance, \cite{Kom,Br}).
 It applies to the total variation flow case when $\beta$ is a content.
 In this case, the subdifferential becomes
\[
	\partial\Phi(u) = \left\{ v \in L^2(I) \biggm|
	v = -(\beta z)_x,\ \|z\|_\infty \leq 1, \ \Phi(u) = \int_I uv\,dx \right\},
\]
when $\Phi=\Phi^t$; see \cite{ACM}.
 The equation \eqref{EAGF} is
\[
	u_t = \operatorname{div} \beta z
\]
with $|z|\leq1$ in $I$ and $\Phi(u)=\int_I u\operatorname{div}\beta z\,dx$.
 We mimic this notion of the solution.
 A function $u\in C\left([0,T),L^2(I)\right)$ is a solution to \eqref{ETOTV} with \eqref{EDir} if there is $z\in L^\infty\left(I\times(0,T)\right)$ such that
\begin{align}
	& u_t = (\beta z)_x \quad\text{in}\quad I \times (0,T) \label{ECah1} \\
	& |z| \leq 1 \quad\text{in}\quad I \times (0,T) \label{ECah2} \\
	& - \int_I u (\beta z)_x\, dx = \int_I \beta |u_x|
	+ \sum_{i=0}^1 |\gamma u-g| \beta(p_i,t). \label{ECah3}
\end{align}
Under this preparation, we shall prove Lemma \ref{LST}.
\begin{proof}[Proof of Lemma \ref{LST}]
\quad
Since $u_t^b=0$, \eqref{ECah1} says that $\beta z$ is a constant $c$.
 The condition \eqref{ECah2} is equivalent to saying that $|c|\leq\beta$.
 Since
\[
	\int_{-L}^L u(\beta z)_x\, dx
	= \int_0^L b(\beta z)_x\, dx = -\beta(0) z(0)b,
\]
\eqref{ECah3} is equivalent to
\[
	\beta(0) z(0) b = \beta(0)b.
\]
In other words, $c$ must be $\beta(0)$.
Thus, the existence of $z$ satisfying $|z|\leq1$ is guaranteed if and only if
\[
	c = \beta(0) \leq \beta(x)
	\quad\text{for all}\quad x \in (-L,L).
\]
The equation \eqref{ECah3} is fulfilled by taking $g=b$.
 Thus $u^b$ is a stationary solution to \eqref{EG2} with \eqref{EDi}.
\end{proof}
%

\section{Numerical experiment} \label{S5} 

In this section, we calculate the solution of \eqref{ER}--\eqref{ERI} with $v_0(x)=1-ce^{-a|x|/\varepsilon}$ and compare its value at $x=0$ with an explicit solution of \eqref{EGFD} whose explicit form is given in \eqref{EEXPS}.

\subsection{Numerical scheme}

Since the initial function, $v_0$, is an iven function, the original problem  \eqref{ER}--\eqref{ERI} is reduced to 
\begin{align*}
	\begin{dcases*}
		\frac{\tau_1}{\varepsilon}v_t=\varepsilon v_{xx}-\frac{a^2(v-1)}{\varepsilon}&in $(0,L)\times(0,\infty)$,\\
		-v_x(0,t)+\frac{b}{\varepsilon}v(0,t)=0,&for $t>0$,\\
		v_x(L,t)=0,&for $t>0$,\\
		v(x,0)=v_0(x),&for $x\in[0,L]$.
	\end{dcases*}
\end{align*}

The computational region $[-L,L]$ is divided into unfiorm mesh partitions:
\begin{align*}
	x_i=i\Delta x,
	\quad
	i=-1,0,\ldots,N,N+1,
	\quad
	\Delta x=\frac{L}{N}.
\end{align*}
The points $x_{-1}$ and $x_{N+1}$ are needed to handle the Neumann boundary conditions.

The approximation of $v$ at $x=x_i$ is written as $v_i$.
The central finite difference approximates the Laplace operator, and the time derivative is approximated by the backward difference, yielding the folliwng linear system
\begin{align*}
	\frac{\tau_1}{\varepsilon}\frac{v_i-\hat{v}_i}{\Delta t}=\varepsilon\frac{v_{i-1}-2v_i+v_{i+1}}{(\Delta x)^2}-\frac{a^2(v_i-1)}{\varepsilon},\quad i=0,1,\ldots,N,
\end{align*}
where $\hat{v}_i$ is the value known at the current time, and $v_i$ is the value to be found at the next time.
The Neumann boundary conditions at $x=0$ and $x=L$ are approximated as
\begin{align*}
	-\frac{v_1-v_{-1}}{2\Delta x}+\frac{b}{\varepsilon}v_0=0
	\quad\text{and}\quad
	\frac{v_{N+1}-v_{N-1}}{2\Delta x}=0
\end{align*}
by the central finite differences.

\subsection{Results}

Some results are shown for different values of $c$ for parameters
\begin{align*}
	L=1,
	\quad
	a=1,
	\quad
	N=200,
	\quad
	\Delta t=(\Delta x)^2,
	\quad
	b=1.
\end{align*}
The results of the numerical experiments are summarised in Figure~\ref{fig:resutls}.
\begin{figure}[tb]
	\begin{minipage}{.5\hsize}
		\centering
		\includegraphics[width=\hsize]{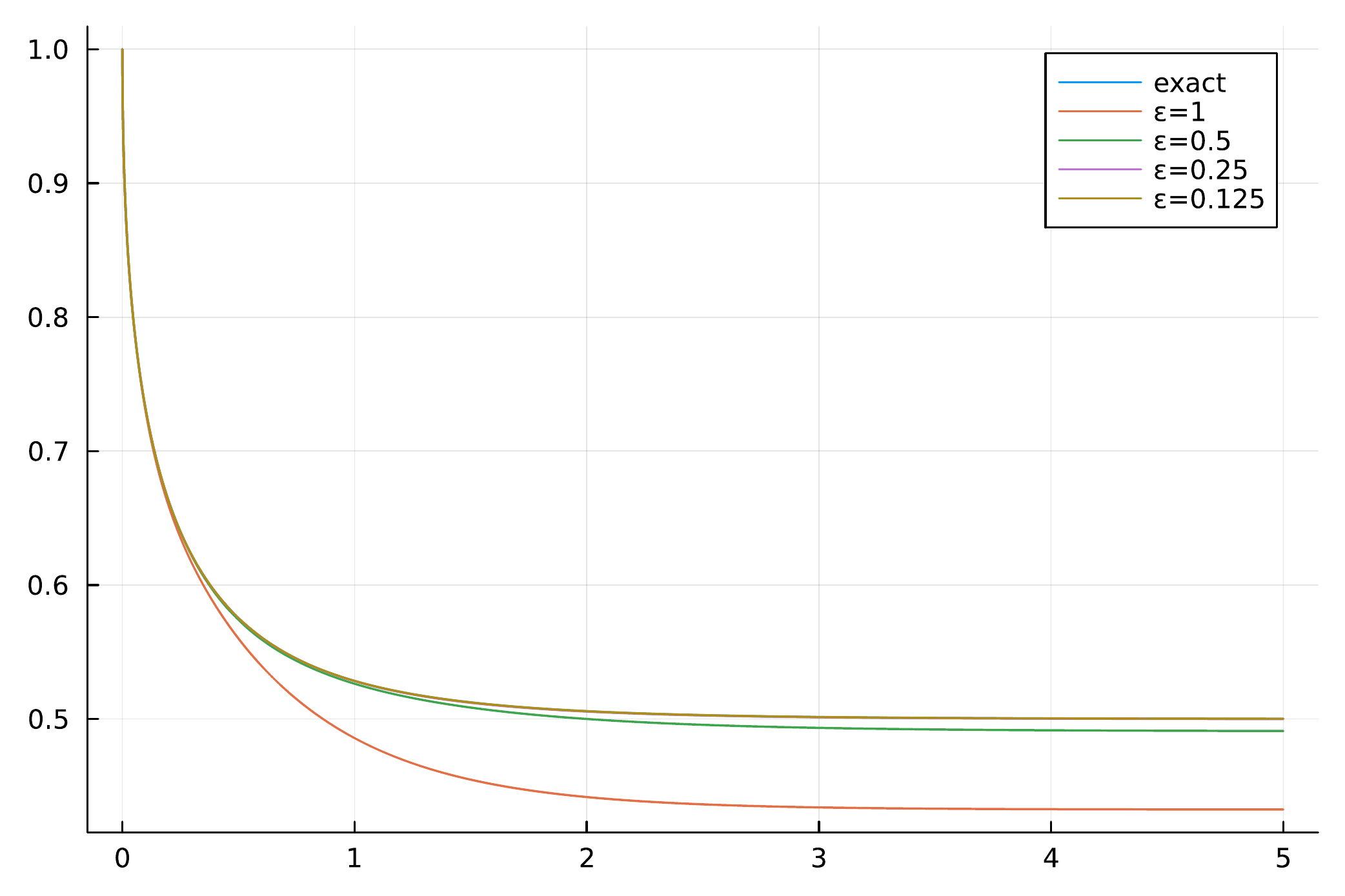}

		\subcaption{}
	\end{minipage}%
	\begin{minipage}{.5\hsize}
		\centering
		\includegraphics[width=\hsize]{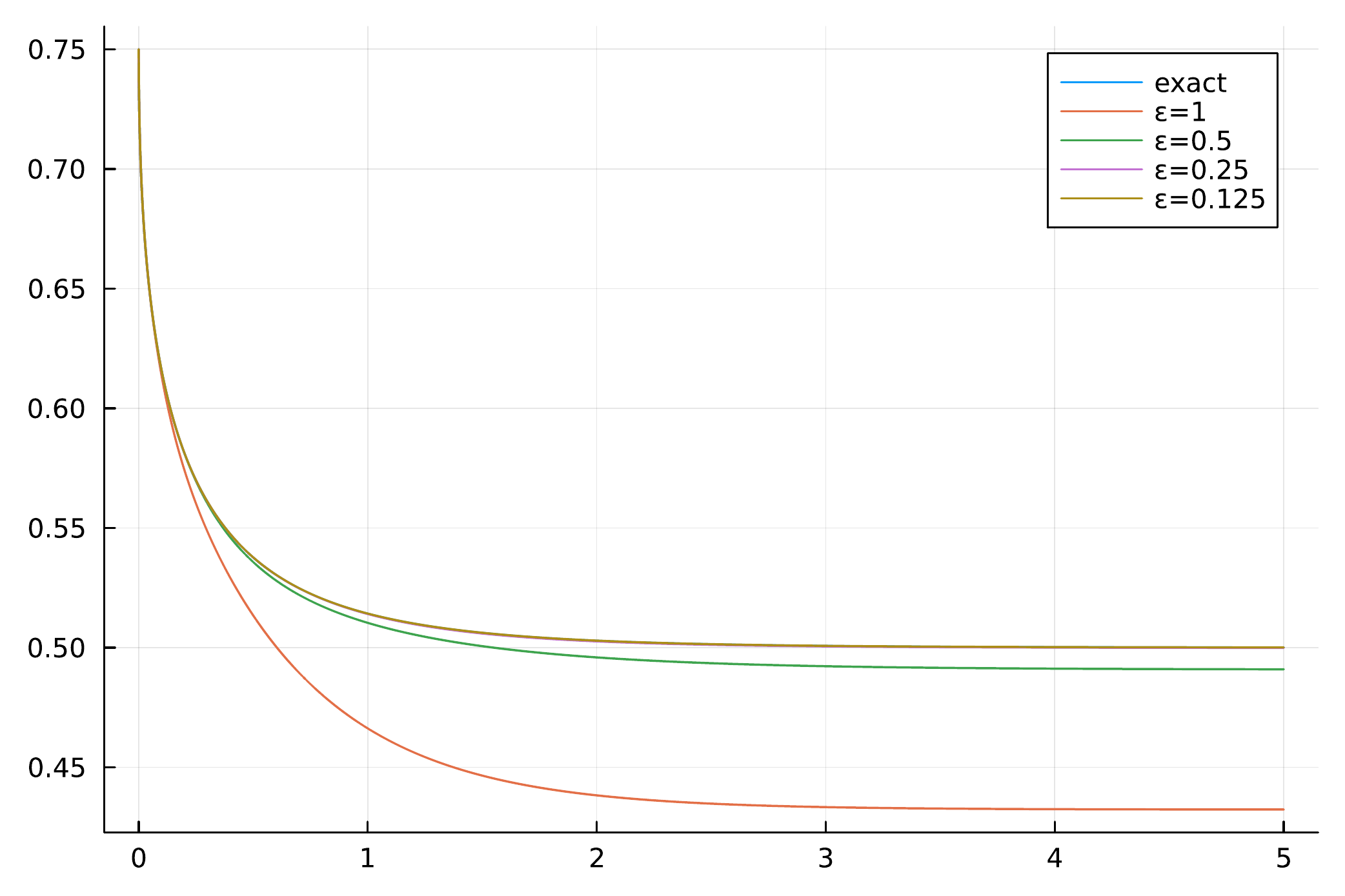}

		\subcaption{}
	\end{minipage}%

	\begin{minipage}{.5\hsize}
		\centering
		\includegraphics[width=\hsize]{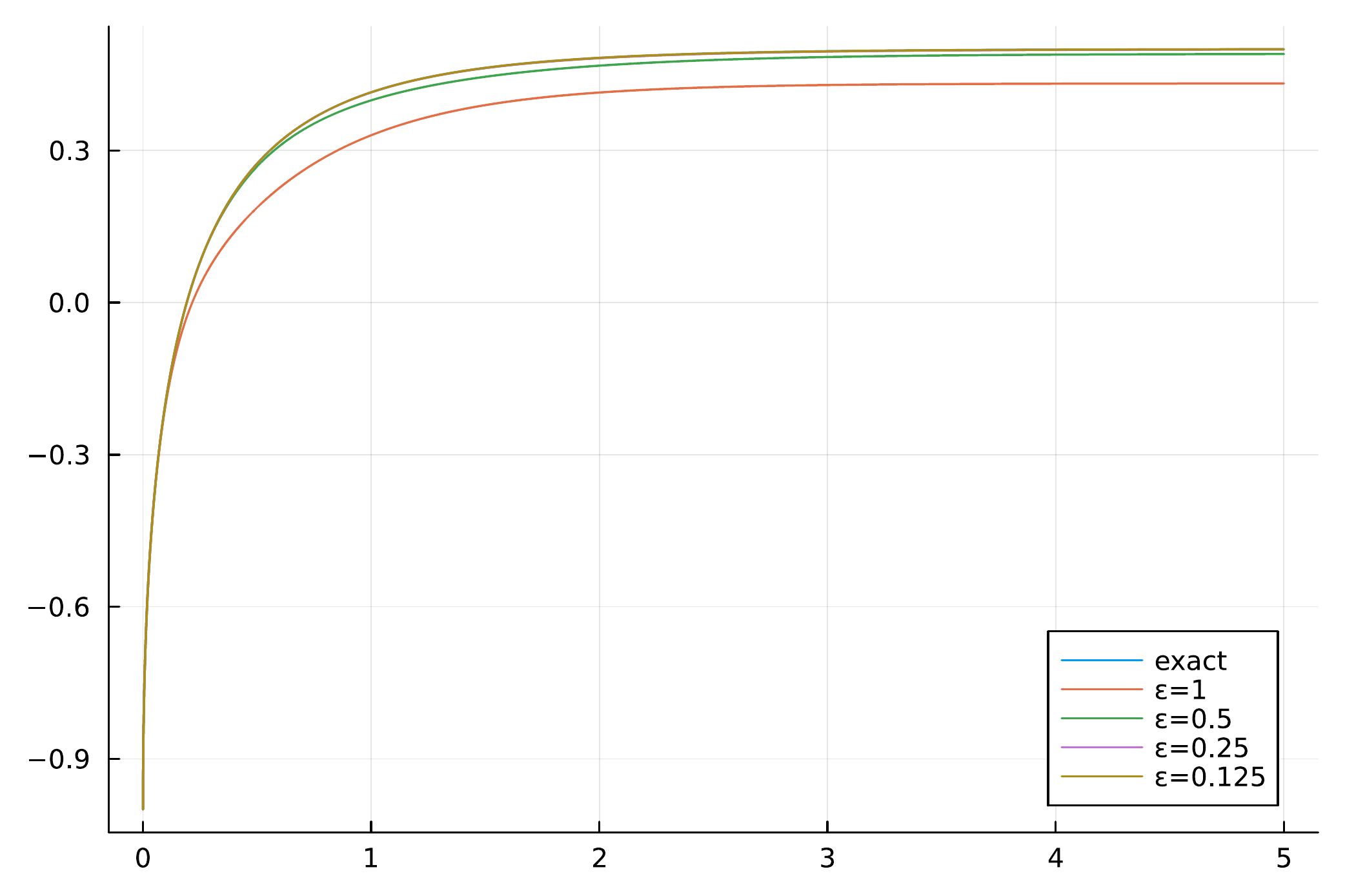}

		\subcaption{}
	\end{minipage}%
	\begin{minipage}{.5\hsize}
		\centering
		\begin{tabular}{|c|c|c|c|}\hline
			\diagbox{$\varepsilon$}{$c$}&$0$&$1/4$&$2$\\\hline\hline
			$1$&$6.8\cdot10^{-2}$&$6.8\cdot10^{-2}$&$6.7\cdot10^{-2}$\\\hline
			$2^{-1}$&$9.1\cdot10^{-3}$&$9.1\cdot10^{-3}$&$9.3\cdot10^{-3}$\\\hline
			$2^{-2}$&$1.4\cdot10^{-4}$&$1.5\cdot10^{-4}$&$2.6\cdot10^{-4}$\\\hline
			$2^{-3}$&$4.6\cdot10^{-5}$&$4.6\cdot10^{-5}$&$4.6\cdot10^{-5}$\\\hline
		\end{tabular}

		\subcaption{}
	\end{minipage}%
	\caption{Results of numerical experiments: (a) $c=0$, (b) $c=1/4$, (c) $c=2$, (d) table of $L^\infty$-errors for different $\varepsilon$ and $c$.}
	\label{fig:resutls}
\end{figure}
In (a), (b), and (c), the horizontal axis represents time, and the vertical axis represents the value at the origin.
As $\varepsilon$ is decreased, the numerical solution converges to the exact solution to the extend that the exact and numerical solutions overlap.
Indeed, the table of $L^\infty$-errors for different values of $\varepsilon$ and $c$ is shown in (d).
The errors for $\varepsilon=2^{-3}$ are of order $10^{-5}$, indicating that the solution for small $\varepsilon$ is an excellent approximation to the solution of the fractional time differential equation obtained as the singular limit.

\section*{Acknowledgements}

The work of the first author was partly supported by JSPS KAKENHI Grant Numbers JP19H00639 and JP20K20342, and by Arithmer Inc., Ebara Corporation,\ and Daikin Industries, Ltd.\ through collaborative grants.
The work of the third author was partly supported by JSPS KAKENHI Grant Number JP20K20342.
The work of the fifth author was partly supported by JSPS KAKENHI Grant Numbers JP22K03425, JP22K18677, 23H00086.

%
%


\end{document}